\definecolor{bl1}{HTML}{1F558A}
\definecolor{pur1}{HTML}{52196D}
\definecolor{mag1}{HTML}{2AD0F1}
\title{Polar Degrees and Closest Points in Codimension Two}
\author{Martin Helmer and Bernt Ivar Utst$\o$l N$\o$dland}
\begin{document}

\begin{abstract} \noindent
Suppose that $X_A\subset \p^{n-1}$ is a toric variety of codimension two defined by an $(n-2)\times n$ integer matrix $A$, and let $B$ be a Gale dual of $A$. In this paper we compute the Euclidean distance degree and polar degrees of $X_A$ (along with other associated invariants) combinatorially working from the matrix $B$. Our approach allows for the consideration of examples that would be impractical using algebraic or geometric methods. It also yields considerably simpler computational formulas for these invariants, allowing much larger examples to be computed much more quickly than the analogous combinatorial methods using the matrix $A$ in the codimension two case. 

\end{abstract}
\maketitle
\section{Introduction}

To a projective variety $X \subset \p^n$ we may associate the \textit{polar varieties} of $X$; these are subvarieties of $X$ whose points have tangent spaces which intersect non-transversally with a fixed linear subspace. The classes of the polar varieties in the Chow ring are invariants of the projective embedding; in particular their degrees, which are often refereed to as \textit{polar degrees}, are projective invariants of $X$. As projective invariants, polar varieties and polar degrees have been historically important in the study and classification of projective varieties  \cite{aluffis2016paper,bank2010geometry,fulton,holme1988geometric,kleiman1986tangency,piene1978polar,PienePolar,tevelev2006projective}. In particular knowing the polar degrees of a smooth variety is equivalent to knowing the Chern classes of the tangent bundle, giving a simple expression for this Chern class. Polar varieties also arise in science and engineering problems where one tests the accuracy of mathematical models against observed data. In this setting it is natural to measure distance using the Euclidean norm and to compute the closest real point to some observed data within the model being studied. In the context of this Euclidean distance optimization problem the polar degrees can be used to compute the \textit{Euclidean distance degree}, a projective invariant which quantifies the difficulty of solving the optimization problem \cite{DHOST,HS,ottaviani2014exact}. 

In this paper we consider the situation where $X$ is a codimension two projective variety parameterized by monomials, i.e.,~$X$ is a codimension two projective toric variety. In this case we develop computationally simple formulas for the quantities which determine the polar degrees, Chern-Mather class, and Euclidean distance degree of a projective toric variety.  

We introduce the objects to be studied in this paper with an example from classical algebraic geometry which also arises in cell biology when studying pore forming cytotoxins used by numerous pathogenic bacteria \cite{AH17,los2013role}. Let $$
A=\begin{bmatrix}
3& 2&1&0\\
1& 1&1&1
\end{bmatrix},
$$this matrix gives rise to the \textit{twisted cubic curve} in $\p^3$ via the closure of the image of a monomial map defined by $A$, that is $$
X_A= \overline{\left\lbrace (t_1^3t_2:t_1^2t_2:t_1t_2:t_2)\; |\;(t_1,t_2)\in (\C^*)^2 \right\rbrace}\subset \p^3.
$$

The toric variety $X_A$ has codimension two. Let $k[x_0,x_1,x_2,x_3]$ be the coordinate ring of $\p^3$, the toric ideal of $X_A$ in this ring is the prime binomial ideal $$
I=({x}_{2}^{2}-{x}_{1} {x}_{3},{x}_{1} {x}_{2}-{x}_{0} {x}_{3},{x}_{1}^{2}-{x}_{0} {x}_{2}).
$$Consider the matrix $$
B=\begin{bmatrix}{-2}&
      {-1}\\
      3&
      1\\
      0&
      1\\
      {-1}&
      {-1}\\
      \end{bmatrix}.
$$The rows of the matrix $B$ generate the kernel of the linear map defined by the matrix $A$, we refer to $A$ and $B$ as Gale dual matrices and call $B$ the Gale dual of $A$.

Let $k[y_0,y_1,y_2,y_3]$ be the coordinate ring of $(\p^\vee)^3$. The conormal variety ${\rm Con}(X_A)$ of $X_A$ in $\p^3 \times (\p^\vee)^3$ parametrizes pairs of smooth points $x \in X_A$ and planes containing $T_{X_A,x}$. Its bigraded ideal is  defined by the sum of the ideal $I$ and the ideal 
defined by the $3\times 3$ minors ($3={\rm codim}(X_A)+1$) of the matrix$$
\begin{bmatrix}
0&
      {-{x}_{3}}&
      2 {x}_{2}&
      {-{x}_{1}}\\
      {-{x}_{3}}&
      {x}_{2}&
      {x}_{1}&
      {-{x}_{0}}\\
      {-{x}_{2}}&
      2 {x}_{1}&
      {-{x}_{0}}&
      0\\
y_0 & y_1 & y_2 & y_3 
\end{bmatrix}.
$$
The multidegree of the bigraded ideal defining ${\rm Con}(X_A)$ are the coefficients of the polynomial $4H^3h+3H^2h^2$, which represents the class $[{\rm Con}(X_A)]$ in the Chow ring $$A^*\left(\p^3 \times (\p^\vee)^3\right)\cong \Z[h,H]/(h^4,H^4).$$

Here $h$ denotes the class of a hyperplane in $\p^3$ and $H$ denotes the class of a hyperplane in $(\p^\vee)^3$. The polar degrees of $X_A$ are by definition this multidegree $$
(\delta_0(X_A),\delta_1(X_A))=(4,3).
$$ The first nonzero polar degree is the degree of the projective dual, in this case $\deg(X_A^{\vee})=\delta_0(X_A)=4$. 

From the polar degrees we may also determine the Chern-Mather class of $X_A$, $c_M(X_A)$ (since $X_A$ is smooth the Chern-Mather class agrees with the Chern class of the tangent bundle, i.e.~$c_M(X_A)=c(T_{X_A})\cap [X_A]$). The Chern-Mather class of $X_A$ (pushed forward to $A^*(\p^3)$) is $$
c_M(X_A)= 2h^3+3h^2\in A^*(\p^3)\cong \Z[h]/(h^4).
$$The Euclidean distance (ED) problem associated to $X_A$ seeks to determine the closest point in $X_A$ to a fixed generic point $u\in \R^4$. More specifically we wish to solve the optimization problem\small \begin{equation}
{\rm Minimize\; the\; function} \;f(t)=(t_1^3t_2-u_1)^2+(t_1^2t_2-u_2)^2+(t_1t_2-u_3)^2+(t_2-u_4)^2.\label{eq:firstEDex}
\end{equation}\normalsize
The critical points associated to this optimization problem are the solutions of the system of polynomial equations 
$$
\frac{\partial f}{\partial t_1}=\frac{\partial f}{\partial t_2}=0.
$$This polynomial systems will have $7$ non-zero complex solutions for generic data $u$, this number is the \textit{ED degree} of $X_A$. Observe that the ED degree of $X_A$ is equal to the sum of all non-zero polar degrees of $X_A$, this will be true in general (see \cite{DHOST}). In the context of systems biology solving this ED problem corresponds to testing if a particular model for pore-forming toxins describes experimentally measured data \cite{AH17}.

For a general projective variety the computation of the ED degree and polar degrees can become quite difficult as the degree of the generators and the dimension of the ambient space grows. This is true for all applicable algebraic or geometric methods (i.e.~Gr\"obner basis, homotopy continuation, etc.). For toric varieties, however, we may avoid the potentially time consuming algebraic or geometric methods and compute these invariants combinatorially. 

For a projective toric variety $X_A$ methods to compute the polar degrees, Chern-Mather class, and EDdegree based on the polyhedral combinatorics of the polytope $ {\rm Conv}(A)$ are given in \cite{HS}. It is also shown in \cite{HS} that much more computationally efficient formulas may be given in terms of the Gale dual of the matrix $A$ when ${\rm codim}(X_A)=1$, i.e.~when $X_A$ is a toric hypersurface. 

In this paper we develop analogous formulas for the polar degrees, Chern-Mather classes and ED degree of $X_A$ in terms of the Gale dual matrix, $B$, of $A$ for the next interesting case, when $X_A$ has codimension two. These formula yield substantially simpler expressions which are much faster to evaluate using a computer. 

The methods developed here build on the work of \cite{HS} and \cite{DS}. We also note that a method to compute the degree of the $A$-discriminant, i.e.,~the projective dual of $X_A$, in the codimension two case using the Gale dual matrix is given in \cite{DS}. Since this number must appear as the first non-zero polar degree our results also, in a sense, generalize their result.

Explicitly, by theorems of \cite{MT} and \cite{HS}, we know that the ED degree and polar degrees of a  projective toric variety $X_A$ are determined by the relative subdiagram volumes of the faces of the polytope $\Conv(A)$. Our contribution (detailed in \S\ref{section:MainResults}) is to give explicit formula for these subdiagram volumes in terms of the Gale dual matrix $B$ of $A$ when $X_A$ has codimension two. 

Working with the Gale dual is advantageous in low codimensions since in that case we work in a low dimensional integer lattice. This allows us compute the ED degree and polar degrees of large examples with a complicated face structure quickly. For example, in \S\ref{section:computation} we consider a projective toric variety $X_{A_6}$ of degree $581454473$ in $\p^9$ (the matrix $A_6$ is given in Appendix \ref{appendix:compEx}). Using the methods developed in this paper we compute ${\rm EDdegree}(X_{A_6})=74638158177$ in less than 30 seconds on a laptop, to find this number using algebraic or geometric methods would require computing the degree of a zero dimensional variety with over $74$ \textit{billion} isolated points in $\p^9$. Such a computation is unfeasible with current algebraic or geometric methods, even using a super computer. Using the combinatorial methods developed in \cite{HS} this computation takes over $2600$ seconds, hence our new combinatorial method gives a speed up of about 98 times in this case (see Table \ref{table:runtimes}). A Macaulay2 \cite{M2} package implementing the results developed in this paper can be found at the link \eqref{eq:website} below:
\begin{center}
\hfill \url{http://martin-helmer.com/Software/toricED_Codim2.html} \hfill  \eqnum\label{eq:website}
\end{center}

The paper is organized as follows, in \S\ref{section:background} we review background on computational tools and formulas that will be need in later sections. The main results are given in \S\ref{section:MainResults}. In \S\ref{section:computation} we test the performance of computer implementations on a variety of examples and analyze the theoretical computational complexity of our new combinatorial methods. 

\section{Background and Preliminaries}\label{section:background}
In this section we give background on toric varietes and their polar degrees, introduce Gale duality and gather some technical results needed in \S\ref{section:MainResults}.

\subsection{Toric Varieties, ED Degrees and Polar Degrees}

Let $A$ be a $d \times n$ integer matrix with columns $a_1,a_2,\ldots,a_n$, 
and rank $d$ such that the vector $(1,1,\ldots,1) $ lies in the row space of $A$ over $\Q$. Note that we allow $A$ to have negative entries.
Each column vector $a_i$ defines a monomial
$t^{a_i} = t_1^{a_{1i}} t_2^{a_{2i}} \cdots t_d^{a_{di}}$. The {\em affine toric variety} defined by $A$ is $$\,\tilde X_A\,=\,\overline{\{ (t^{a_1}, \ldots , t^{a_n}) \,:\,t \in (\C^*)^d \}}\subset \C^n,$$ that is, $\,\tilde X_A\,$ is the closure in $\C^n$ of the monomial parametrization specified by $A$. The affine toric variety $ \tilde{X_A}$ is the affine cone over the {\em projective toric variety} $X_A \subset \p^{n-1}$, that is $X_A$ is the closure in $\p^{n-1}$ of the image of the same monomial map. We have that ${\rm dim}(X_A) = d-1$ and ${\rm dim}(\tilde X_A) = d$. To the projective toric variety $X_A$ we can associate a polytope $P={\rm Conv}(A)$, which is the convex hull of the lattice points specified by the columns of the matrix $A$.  

We have a particular interest in the case when the toric variety $X_A$ has codimension two in $\p^{n-1}$; in this case $A$ is a $(n-2) \times n$ integer matrix. Let $B$ be a Gale dual matrix of $A$, i.e.~a $(n \times 2)$ matrix such that the image of $B$ equals the kernel of $A$. We will refer to a finitely generated free abelian group as a \textit{lattice}. Let $\Z B \subset \Z^n$ be the lattice spanned by the columns of the matrix $B$. We may associate a \textit{lattice ideal}, $I_B$, to the lattice $\Z B$ as follows:  \begin{equation}
I_B=\left( x^{l^+}-x^{l^-}\; | \; l \in \Z B \right) ,\label{eq:BinomialIdeal}
\end{equation}
 where $l^+_i$ is equal to $l_i$ if $l_i>0$ and $0$ otherwise, and where $l^-_i$ is equal to $|l_i|$ if $l_i<0$ and $0$ otherwise.
\begin{example} \label{runningexample}
The following example will be used throughout the paper to illustrate definitions and results. Consider the surface $X_A \subset \p^4$ arising from the matrix
\[ A = \begin{bmatrix} -2 & -2 & 1 & 0 & 0 \\ 4 & 0 & 0 & 1 & 0 \\ 1 & 1 & 1 & 1 & 1 \end{bmatrix}, {\rm \; with \;Gale\; dual}\;\;\; B=\begin{bmatrix} 1 & 0 \\ 0 & 1 \\ 2 & 2 \\ -4 & 0 \\ 1 & -3 \end{bmatrix}.\]
Explicitly the surface $X_A$ is given by \begin{equation}
X_A=\overline{\left\lbrace\left(\frac{t_2^4t_3}{t_1^2}:\frac{t_3}{t_1^2}:{t_3}{t_1}:{t_2}{t_1}:{t_3} \right)\; |\; t\in (\C^*)^3 \right\rbrace}=V(x_1x_3^2x_5-x_4^4,x_2x_3^2-x_5^3)\subset \p^4.
\end{equation} The polytope $P= \Conv(A)$ associated to $X_A$ is given in Figure \ref{figure:S2Ex}. $P$ has $3$ vertices $v_1,v_2,v_3$ and $3$ edges $e_1,e_2,e_3$.
\begin{figure}[h!]
  \begin{center} \begin{tikzpicture}[scale=0.9]
\draw [pur1,very thick](-2,4) -- (1,0);
\draw [pur1,very thick](1,0) -- (-2,0);
\draw [pur1,very thick](1,0) -- (-2,0);
\draw [pur1,very thick](-2,4) -- (-2,0);
\node at (-0.5,-.75) {${\color{pur1} e_1}$};
\node at (-2.75,2) {${\color{pur1} e_3}$};
\node at (0.15,2.35) {${\color{pur1} e_2}$};
\node at (1.65,-.35) {${\color{mag1} v_3}=a_3$};
\node at (-0.35,1) {$a_4$};
\node at (0,-0.35) {$a_5$};
\node at (-2.45,-0.35) {${\color{mag1}v_2}=a_2$};
\node at (-2.45,4.3) {${\color{mag1}v_1}=a_1$};
\fill[bl1] (0,1) circle[radius=2pt];
\fill[mag1] (-2,4) circle[radius=2pt];
\fill[mag1] (1,0) circle[radius=2pt];
\fill[bl1] (0,0) circle[radius=2pt];
\fill[mag1] (-2,0) circle[radius=2pt];
\end{tikzpicture}
\end{center}\caption{$P={\rm Conv}(A)$. \label{figure:S2Ex}}
\end{figure}
\end{example}

The rows of the matrix $B$ are denoted by $b_i$, $i=1,...,n$. We assume as in \cite[p.13]{DS} that all $b_i$ are non-zero. This assumption is equivalent to saying that $X_A$ is not a cone over a coordinate point. We also note that since $X_A$ is irreducible, the rows of $B$ necessarily generate $\Z^2$ \cite[p.9]{DS}. 

We now review the ED problem for toric varieties. For what follows we fix a vector $\lambda = (\lambda_1 ,\ldots,\lambda_n)$ of positive real numbers. We define the $\lambda$-weighted Euclidean norm on $\R^n$ to be
$|| x ||_\lambda = (\sum_{i=1}^n \lambda_i x_i^2)^{1/2}$.
For a given  $u \in \R^n$, we wish to compute a real point $v \in \tilde X_A$ which is closest to the given $u$.
In particular the Euclidean distance problem is the constrained optimization problem:
\begin{equation}
\label{eq:ED_GenForm}
 {\rm Minimize}\,\,\, || u-v||_\lambda \,\,\,
{\rm such\;  that} \,\,\, v \in \tilde X_A \cap \R^n.
\end{equation}
Alternatively, using the parametric description of $X_A$, we can formulate the ED problem as the unconstrained optimization problem
\begin{equation}
\label{eq:EDUnconst}
 {\rm Minimize}\,\,  \sum_{i=1}^n \lambda_i ( u_i - t^{a_i}  )^2 \,\,\,
\hbox{over all $\,\,t = (t_1,\ldots,t_d) \in \R^d$. } 
\end{equation}
For generic $u$ and $\lambda$ the number of complex critical points of \eqref{eq:ED_GenForm} is constant, we refer to this number as the \textit{Euclidean distance degree} of $X_A$ and denote it ${\rm EDdegree}(X_A) $. This matches the definition of ED degree given in \cite{HS,DHOST, ottaviani2014exact} for the toric variety $X_A$.
The ED degree quantifies the inherent algebraic complexity of finding and
 representing the exact solutions to the ED problems \eqref{eq:ED_GenForm} and \eqref{eq:EDUnconst}. For instance note that ${\rm EDdegree}(X_A) $ it is an upper bound for the number of local minima of the ED problem associated to $X_A$. Since the degree of the monomial map defining $X_A$ may be greater than one, i.e.~ we could have $[\Z^d: \Z A]>1$, the number of complex critical points of \eqref{eq:EDUnconst} is given by
 ${\rm EDdegree}(X_A) \cdot [\Z^d: \Z A]$.

The relation bewteen the ED degree and polar degrees is the following: The ED degree of a projective variety
$X \subset \p^{n-1}$ is equal to the sum of the polar degrees of $X$, see \cite[Theorem 5.4]{DHOST}, that is
 \begin{equation}
\label{eq:sumofpolar}
\mathrm{EDdegree}(X)\,\,=\,\, \, \delta_0(X)  + \delta_1(X) + \cdots + \delta_{n-1}(X).
\end{equation}
We now define the polar degrees of $X$ following the conventions of Fulton \cite{fulton},
Holme \cite{holme1988geometric}, Piene \cite{piene1978polar} and others. The $j$-th \textit{polar degree} of $X$, written $\delta_j(X)$, is the degree of the $j$-th \textit{polar variety} of $X$ with respect to a general linear subspace $\,\ell_j=\p^{j+{\rm codim}(X)}\subset \p^{n-1}$:
$$ P_j\,\,=\,\,\,\overline{\left\lbrace x \in X_{\mathrm{smooth}} \; | \; \dim( T_x X \cap \ell_j )\,\geq \,j+1 
\right\rbrace} \,\,\, \subset\,\,\, \p^{n-1}.$$
Following Kleiman \cite{kleiman1986tangency}, we can also obtain the polar degrees $\delta_j(X) $ from the rational equivalence class of the conormal variety in the {Chow ring} $A^*(\p^{n-1}\times (\p^{n-1})^{\vee})\cong \Z[H,h]/(H^n,h^n)$; in this convention $H$ denotes the rational equivalence class of a generic hyperplane from the $\p^{n-1}$ factor and $h$ denotes the rational equivalence class of a generic hyperplane from the $(\p^{n-1})^{\vee}$. The \textit{conormal variety} of $X$ is
 $$
{\rm Con}(X)=\overline{ \left\lbrace (p,L)\; | \; p \in X_{\rm reg} \;{\rm and}\; L \supseteq T_pX  \right\rbrace }\subset \p^{n-1}\times (\p^{n-1})^{\vee}.
$$ 
The ideal of ${\rm Con}(X)$ can be constructed as follows. Let $I_X$ be the ideal defining $X$ in the coordinate ring of $\p^{n-1}$ and let $\C[y]=\C[y_1,\dots, y_n]$ be the coordinate ring of $(\p^{n-1})^{\vee}$. Set $c={\rm codim}(X)$, and let $\mathcal{J}$ be the ideal defined by the $(c+1)\times (c+1)$-minors of the matrix $\left[ J(X)\; y \right]^{T}$, where $J(X)$ is the Jacobian of $X$. The ideal of ${\rm Con}(X)$ in $\C[x,y]$ is $\mathcal{K}=(I_X+\mathcal{J}):(I_{{\rm Sing}(X)})^{\infty}$. The Chow class of ${\rm Con}(X)$ is
$$
[{\rm Con}(X)]=\delta_0H^{n-1}h+\cdots+\delta_{n-2}Hh^{n-1}\in A^*(\p^{n-1}\times (\p^{n-1})^{\vee}),
$$where the integers $\delta_0=\delta_0(X),\dots,\delta_{n-2}=\delta_{n-2}(X)$ are the polar degrees of $X$ defined above. From the point of view of commutative algebra (i.e.,~in the terminology of Miller and Sturmfels \cite{MScca}) the polar degrees are the multidegree of the bigraded ideal $\mathcal{K}$.

\subsection{Polar Degrees and the Chern-Mather class of $X_A$ via ${\rm Conv}(A)$}\label{subsection:polarDegreesA}
The Chern-Mather class was first introduced by MacPherson in \cite{macpherson} and is a generalization of the total Chern class of the tangent bundle to singular varieties. In projective space we may express the Chern-Mather class in terms of the polar classes, and conversely may express the polar classes in terms of the Chern-Mather class \cite{piene1978polar},\cite{aluffis2016paper}; in the remainder of this paper we will employ the latter point of view. To this end we now review formulas for the polar degrees and ED degree of a projective toric variety $X_A$ in terms of the Chern-Mather class of $X_A$, $C_M(X_A)$. In the context of toric varieties, we see that the coefficients of this characteristic class $C_M(X_A)$ take the form of weighted normalized lattice volumes, which we will refer to as the \textit{Chern-Mather} volumes. The Chern-Mather volume will agree with the usual normalized volume when $X_A$ is smooth. 

The Chern-Mather volumes are defined in terms of the local Euler obstruction associated to a face of the polytope of a projective toric variety. The local Euler obstruction of a variety is a constructible function $\Eu : X \to \Z$. It was originally used by MacPherson to construct Chern classes for singular varieties \cite{macpherson}.

\begin{definition}
Given faces $\beta \subset \alpha$ we define $i(\alpha,\beta)$ as the index $[\Z \alpha \cap \R\beta : \Z \beta ]$, where $\R \beta$ is the linear subspace of $\R^d$ spanned by $\beta$. We also define the relative normalized subdiagram volume $\mu(\alpha,\beta)$ (cf.~\cite[Definition 3.8]{GKZ}) as follows: let $A_{\alpha}/\beta$ denote the image of the lattice points $A \cap \alpha$ in the quotient lattice $\Z \alpha / \Z \alpha \cap \R\beta$, then \begin{equation}
 \mu (\alpha,\beta) = \Vol(\Conv(A_\alpha/\beta)) - \Vol(\Conv(A_\alpha/\beta \setminus \{ 0 \}),\label{eq:subdiagramVol}
\end{equation}
where the volume is normalized with respect to the lattice $\Z \alpha / \Z \alpha \cap \R\beta$.
\end{definition}

\begin{proposition} \cite[Thm.~4.7]{MT}
For a projective toric variety  $X_A\subset \p^{n-1}$, the local Euler obstruction is constant on any torus-orbit and can be computed recursively as follows 
\begin{enumerate}
\item ${\rm Eu}(P)\,\,=\,\,1,$
\item $\displaystyle {\rm Eu}({\beta}) \quad =\sum_{{\alpha} {\rm \;s.t.}\; {\beta}{\;\rm is\; a} \atop                                                        
{\rm proper\; face \; of \;} {\alpha}}                                          
\!\!\!\!                                                                        
(-1)^{\dim({\alpha})-\dim({\beta})-1} { \mu(\alpha, \beta)} {i(\alpha,\beta)} {\rm Eu}(\alpha).$
\end{enumerate}\label{propn:EUPropDef}
\end{proposition}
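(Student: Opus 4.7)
The plan is to establish the two assertions separately: torus-invariance of $\mathrm{Eu}$ on each orbit, and then a Möbius-type recursion on the face poset of $P$ arising from a local toric slice model near each orbit.

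For the first assertion, I would use that the Nash modification $\widehat{X_A} \to X_A$ together with its tautological subbundle of the Grassmann bundle of $d$-planes in $T\p^{n-1}$ is a functorial construction. Since the torus $(\C^*)^d$ acts on $X_A$ by automorphisms, it lifts to an equivariant action on $\widehat{X_A}$ preserving the tautological bundle. The local Euler obstruction at a point $x$, which is computable as a MacPherson-type integral of Chern classes of this tautological bundle over the fiber of a small resolution of the Nash blowup above $x$, is therefore invariant under the torus action, and so $\mathrm{Eu}$ is constant along every orbit $O_\beta$. This reduces the computation of $\mathrm{Eu}(\beta)$ to the computation at a single distinguished point $x_\beta \in O_\beta$.

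For the recursive formula, I would use the orbit-cone correspondence: a neighborhood of $x_\beta$ in $X_A$ is locally isomorphic to $(\C^*)^{\dim\beta} \times X_{\mathrm{star}(\beta)}$, where $X_{\mathrm{star}(\beta)}$ is the transverse toric variety whose face lattice is the interval $[\beta, P]$, with lattice data obtained from the quotient $\Z\alpha / (\Z\alpha \cap \R\beta)$ for each face $\alpha \supseteq \beta$. Under this local model, $\mathrm{Eu}$ at $x_\beta$ equals the local Euler obstruction of $X_{\mathrm{star}(\beta)}$ at its distinguished fixed point. Writing the constant function $\mathbbm{1}_{X_{\mathrm{star}(\beta)}}$ as an alternating sum $\sum_{\alpha \supseteq \beta}(-1)^{\dim\alpha-\dim\beta-1}\mathbbm{1}_{\overline{O_\alpha}}$ of indicator functions of closed orbits in MacPherson's group of constructible functions, and then applying the local Euler obstruction functional, gives a sum over faces $\alpha$ strictly containing $\beta$ whose coefficients we must identify.

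The main obstacle is the identification of the local combinatorial weight at each face $\alpha \supsetneq \beta$ with $\mu(\alpha,\beta)\,i(\alpha,\beta)\,\mathrm{Eu}(\alpha)$. The factor $\mathrm{Eu}(\alpha)$ comes from the transverse Euler obstruction in the slice, and the lattice index $i(\alpha,\beta) = [\Z\alpha \cap \R\beta : \Z\beta]$ enters when one changes from the ambient lattice $\Z\alpha$ to the quotient lattice defining $X_{\mathrm{star}(\beta)}$, accounting for the non-normality of the slice. The volume factor $\mu(\alpha,\beta)$ is then the normalized lattice volume of the subdiagram region of $\alpha$ modulo $\beta$, exactly as in \eqref{eq:subdiagramVol}, and it appears because the polar/Euler-obstruction contribution of the stratum $O_\alpha$ in the slice is computed as an intersection number with a generic hyperplane, which for toric strata equals this subdiagram volume by the Kushnirenko--Khovanskii formula. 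Carefully tracking these three factors together with the codimension sign $(-1)^{\dim\alpha-\dim\beta-1}$ is the delicate bookkeeping step; once it is in place, the recursion follows by combining it with the torus-invariance established in the first step.
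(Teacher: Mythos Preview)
The paper does not prove this proposition at all: it is quoted verbatim from \cite[Thm.~4.7]{MT} and used as a black box throughout \S\ref{section:background} and \S\ref{section:MainResults}. There is therefore no ``paper's own proof'' to compare your proposal against.

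That said, your sketch is broadly in the spirit of the original Matsui--Takeuchi argument: torus-equivariance of the Nash blowup gives constancy of $\mathrm{Eu}$ on orbits, and a local product decomposition near an orbit reduces the computation to the transverse slice. Where your outline is thinnest is precisely the step you flag as ``delicate bookkeeping'': the identification of the coefficient with $\mu(\alpha,\beta)\,i(\alpha,\beta)\,\mathrm{Eu}(\alpha)$. Your appeal to writing $\mathbbm{1}$ as an alternating sum of orbit-closure indicators and then ``applying the local Euler obstruction functional'' is not quite right as stated---$\mathrm{Eu}$ is not linear on constructible functions in that way, and the subdiagram volume $\mu(\alpha,\beta)$ does not arise from a generic hyperplane intersection via Kushnirenko--Khovanskii. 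In \cite{MT} the recursion comes instead from Gonzalez-Sprinberg's formula expressing the local Euler obstruction at the cone point of an affine toric variety as a sum of contributions from higher-dimensional orbits, each weighted by the (relative) multiplicity of the orbit closure at that point; it is this multiplicity that unpacks as $\mu(\alpha,\beta)\,i(\alpha,\beta)$. If you want a self-contained proof you should replace the constructible-function heuristic with that multiplicity computation.
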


\begin{example}
Consider the surface $X_A$ from Example \ref{runningexample}. We have that $i(P,e_3)=4$, while $i(P,e_1)=i(P,e_2)=1$. To compute the subdiagram volume $\mu$ using the matrix $A$ we compute normalized volumes. For instance we see that $\mu(P,v_1)=\Vol(P) - \Vol ( \Conv (a_2,a_3,a_4,a_5)) = 12-3=9$ and that $\mu(P,e_1)=1,\mu(P,e_2)=1, \mu(P,e_3)=2$.

Since this example is a surface all these numbers are easily computable from the definitions above using the $A$-matrix. However when $X_A$ has large dimension this approach becomes much harder. In \S\ref{section:MainResults} we develop formulas in terms of the $B$-matrix, which in this example recovers the above numbers, but has the advantage of working easily for any $X_A$ of codimension two (even when the dimension is very large).
\end{example}

\begin{definition}
For a face $\beta$ of $P$ we define $\Vol(\beta)$ as the volume of $\beta$, normalized with respect to the lattice  $\Z \beta$.
\end{definition}

\begin{definition}
Let $X_A\subset \p^{n-1}$ be a projective toric variety and let $P={\rm Conv}(A)$. The dimension $i$ \textit{Chern-Mather volume}, $V_i$, of $X_A$ is given by
\[ V_i = \sum_{{\beta \;{\rm a \; face \; of }P} \atop {{\rm with}\; \dim(\beta)=i}} \Vol(\beta)\Eu(\beta). \]
 When $X_A$ is smooth we have that $\Eu(\beta)=1$ for all faces of $P$, and $V_i$ is the sum of the normalized volumes of all dimension $i$ faces of $P$. \label{def:CM_Volume}
\end{definition}

Let $A^*(\p^{n-1})\cong \Z[h]/(h^n)$ denote the Chow ring of $\p^{n-1}$, with $h$ representing the rational equivalence class of a hyperplane in $\p^{n-1}$. We may express the push-forward of the Chern-Mather class of $X_A$ to $\p^{n-1}$ as \begin{equation}
c_M(X_A)=\sum_{i=0}^{\dim(X_A)} V_i h^{n-i}\in A^*(\p^{n-1}),
\end{equation}where the $V_i$ are the Chern-Mather volumes of Definition \ref{def:CM_Volume}. From \cite{PieneCM} we have that the Chern-Mather class, in the Chow ring of $X_A$, may be written as \begin{equation}
C_M(X_A)=\sum_{\alpha} {\rm Eu}(\alpha)[X_\alpha]\in A^*(X_A).
\end{equation} 

Using \cite[Thm.~1.2]{HS} we may also write the polar degrees of a projective toric variety $X_A\subset \p^{n-1}$ in terms of the Chern-Mather volumes of $X_A$ as 
\begin{equation}
\delta_i (X_A)= \sum_{j=i+1}^{n-2} (-1)^{n-3-j} \binom {j}{i+1}V_{j-1} 
\label{eq:polarDegressCM}
\end{equation} 
for $i=0,...,n-3$. Using the formula above and the fact that the ED degree is the sum of the polar degrees (see also \cite[Thm.~1.1]{HS}) we obtain:

\begin{equation}
{\rm EDdegree}(X_A)= \sum_{j=0}^{n-3} (-1)^{n-3-j}(2^{j+1}-1)V_{j} .
\label{eq:EDDegressCM}
\end{equation}

The main task, from a practical point of view, when computing the invariants discussed above is computing the expressions $\mu(\alpha,\beta)$ appearing in Proposition \ref{propn:EUPropDef}; giving formulas for this expression will be the main focus of \S\ref{section:MainResults}. In proving these results we will make use of the method of Helmer and Sturmfels \cite[Remark 2.2]{HS} stated here as Proposition \ref{propn:SubDIagVolume}. 

\begin{proposition} \label{proposition:HSAlgorithm}
Let $X_A\subset \p^{n-1}$ be a projective toric variety with associated polytope $P={\rm Conv}(A)$, let $\alpha$ be a face of $P$ and let $\beta$ be a face of $\alpha$. Order the columns of $A$ so that those in $\beta$ comes first, then those from $\alpha \setminus \beta$ and finally those in $A \setminus \alpha$. The row Hermite normal form of this reordered matrix has block structure
\[ \begin{bmatrix} * & * & * \\ 0 & C & * \\ 0 & 0 & * \end{bmatrix} \]
where the integer matrix $C$ has $\dim(\alpha)-\dim(\beta)$ rows and
\[ \mu (\alpha,\beta) = \Vol(\Conv(C \cup 0)) - \Vol(\Conv(C)).\]\label{propn:SubDIagVolume}
\end{proposition}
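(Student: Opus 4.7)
The plan is to interpret the row Hermite normal form as a unimodular change of basis on the ambient lattice $\Z^{n-2}$ and then read off the image of the columns of $A$ in the quotient lattice $\Z\alpha / (\Z\alpha \cap \R\beta)$ directly from the block decomposition.

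First I would record what the HNF accomplishes. Writing $H = UA$ with $U \in GL_{n-2}(\Z)$, we may view $U$ as expressing each column of $A$ in a new integer basis of $\Z^{n-2}$. Since the columns indexed by $\beta$ have $\R$-span of dimension exactly $\dim(\beta)$, and similarly for those indexed by $\alpha$, the triangular form of the HNF forces these columns to be supported in the first $\dim(\beta)$ and the first $\dim(\alpha)$ coordinates respectively. This yields the stated block shape, with $C$ occupying rows $\dim(\beta)+1,\dots,\dim(\alpha)$ and having $\dim(\alpha)-\dim(\beta)$ rows.

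Next I would identify the relevant subspaces in the new basis. The columns in $\beta$ lie in $\R\beta$ and span it; because they are supported on the first $\dim(\beta)$ coordinates, $\R\beta$ coincides with $\R e_1 \oplus \cdots \oplus \R e_{\dim(\beta)}$. An analogous argument gives $\R\alpha = \R e_1 \oplus \cdots \oplus \R e_{\dim(\alpha)}$. Therefore the projection $\R\alpha \to \R\alpha/\R\beta$ is realized by discarding the first $\dim(\beta)$ coordinates. Under this projection the $\beta$-columns go to $0$, while each column in $\alpha\setminus\beta$ goes to the corresponding column of $C$, so the image of $A\cap\alpha$ in the quotient is precisely $\{0\}\cup\text{(columns of }C\text{)}$.

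It then remains to check that this identification respects the \emph{lattice} quotient $\Z\alpha/(\Z\alpha\cap\R\beta)$, so that volumes normalized with respect to this lattice agree with lattice volumes computed from $C$. Because $U$ is unimodular, the projection of $\Z\alpha$ onto its last $\dim(\alpha)-\dim(\beta)$ relevant coordinates in the new basis is exactly the sublattice $L \subset \Z^{\dim(\alpha)-\dim(\beta)}$ generated by the columns of $C$, and the kernel of this projection restricted to $\Z\alpha$ is $\Z\alpha\cap\R\beta$. Hence $L \cong \Z\alpha/(\Z\alpha\cap\R\beta)$ as lattices, so $\Vol_L(\Conv(C\cup 0))$ and $\Vol_L(\Conv(C))$ compute the two normalized volumes in the definition \eqref{eq:subdiagramVol}, and subtracting gives $\mu(\alpha,\beta)$ as claimed.

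I expect the main obstacle to be the final lattice-level identification: one must rule out that the column-lattice $L$ of $C$ is a proper sublattice of $\Z^{\dim(\alpha)-\dim(\beta)}$ in a way that would distort the normalization. This is where the choice of \emph{row} HNF (rather than column HNF) and the particular block ordering matter, because they guarantee that $L$ is exactly the image of $\Z\alpha$ under coordinate projection, so the volumes in the proposition are automatically the correctly normalized volumes in the quotient lattice appearing in the definition of $\mu(\alpha,\beta)$.
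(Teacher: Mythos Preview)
The paper does not actually prove this proposition; it is quoted from \cite[Remark~2.2]{HS} and used as a black box. So there is no ``paper's own proof'' to compare against, and your proposal is in effect supplying the missing justification.

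Your overall strategy is the right one and the argument is essentially correct: row HNF implements a unimodular change of basis, in the new basis the subspaces $\R\beta\subset\R\alpha$ become coordinate subspaces, and the coordinate projection realizes the quotient $\Z\alpha/(\Z\alpha\cap\R\beta)$ as the lattice generated by the columns of $C$, carrying $A\cap\alpha$ to $\{0\}\cup(\text{columns of }C)$. Your final paragraph correctly identifies (and disposes of) the only subtle point, namely that the normalization is with respect to the column-lattice of $C$ itself, so it is irrelevant whether that lattice is all of $\Z^{\dim(\alpha)-\dim(\beta)}$.

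Two small corrections. First, you write $U\in GL_{n-2}(\Z)$, which is only the codimension-two case; the proposition is stated for arbitrary $A$, so this should be $GL_d(\Z)$. Second, your dimension count is off by one in the intermediate step: since $(1,\dots,1)$ lies in the row space of $A$, the columns indexed by $\beta$ sit on an affine hyperplane not through the origin, so their \emph{linear} span has dimension $\dim(\beta)+1$, not $\dim(\beta)$. The same shift applies to $\alpha$, so the difference is still $\dim(\alpha)-\dim(\beta)$ and your conclusion about the number of rows of $C$ is unaffected; but the sentence ``$\R\beta$ coincides with $\R e_1\oplus\cdots\oplus\R e_{\dim(\beta)}$'' should read $\R e_1\oplus\cdots\oplus\R e_{\dim(\beta)+1}$.
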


\subsection{Working with the Gale Dual}\label{subsection:workingWithGaleDual}

When $X_A$ is a codimension two projective toric variety the Gale dual matrix $B$ of $A$ has only two columns, meaning if we use $B$ we may work over integer lattices in $\Z^2$ rather than in the (often) much larger integer lattices in $\Z^{\dim(X_A)}$. This approach yields significant benefits in computational speed (see \S\ref{section:computation}) and also adds theoretical insights. In order to take advantage of this approach to compute polar degrees and other invariants we will need some basic results relating the structure of the Gale dual and the face structure of the polytope $P={\rm Conv}(A)$. 

\begin{proposition}\cite[Lemma 14.3.3]{CLS} \label{faces}
Fix $I \subset \{1,...,n\}$. The following are equivalent:
\begin{itemize}
\item There is a face $\beta$ of $P$ such that $a_i \in \beta$ if and only if $i \in I$.
\item There are positive numbers $t_i$ such that $\displaystyle\sum_{i \in I^c} t_ib_i = 0$.
\end{itemize}
\end{proposition}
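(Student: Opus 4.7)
My approach is to encode both conditions as the existence of a single vector $u\in\R^n$ with a prescribed zero/positivity pattern, and then pass between the two encodings via the Gale-duality identification that the row span of $A$ equals the orthogonal complement of the image of $B$.

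First, I would translate the face condition into linear algebra. By the standard characterization of faces of a polytope, a subset $I\subseteq\{1,\dots,n\}$ indexes a face $\beta$ of $P$ (in the sense that $a_i\in\beta$ iff $i\in I$) exactly when there is an affine functional $\psi(x)=w^T x-c$ on $\R^{n-2}$ vanishing on $\{a_i:i\in I\}$ and strictly positive on the remaining columns. Setting $u_i:=\psi(a_i)$ produces a vector $u\in\R^n$ with $u_i=0$ for $i\in I$ and $u_i>0$ for $i\in I^c$. The hypothesis that $(1,\dots,1)=v^T A$ lies in the row span of $A$ lets me rewrite $u=(w-cv)^T A$, which places $u$ in the row span of $A$. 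Conversely, any $u$ in the row span with this sign pattern arises from a linear functional that supports $P$ along a face whose lattice points are exactly $\{a_i:i\in I\}$.

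Next, I would dualize using the defining property of the Gale dual. Since the row span of $A$ equals $\ker(A)^{\perp}$, and since the image of $B$ equals $\ker(A)$ by definition, the condition $u\in\operatorname{rowspan}(A)$ is equivalent to $u^T B=0$, which read row by row is the single vector identity $\sum_{i=1}^n u_i b_i=0$ in $\R^2$. Combining this with the support conditions on $u$ and setting $t_i=u_i$ for $i\in I^c$, the sum reduces to $\sum_{i\in I^c} t_i b_i=0$ with $t_i>0$, which is precisely the second condition.

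The main point requiring care is the sign convention (``face'' must translate into ``strictly positive on the complement'') and the absorption of the constant term $c$ into the linear part using that $(1,\dots,1)$ lies in the row span of $A$; after these bookkeeping steps, the equivalence is a direct reformulation and the remainder of the argument is formal.
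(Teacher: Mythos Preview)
Your argument is correct and is the standard proof of this Gale-duality fact. Note, however, that the paper does not actually supply a proof of this proposition: it is quoted from \cite{CLS} (Lemma~14.3.3) and used as a black box, so there is no ``paper's own proof'' to compare against. Your write-up is essentially the textbook argument one finds behind that citation---supporting-hyperplane characterization of faces, absorption of the affine constant via $(1,\dots,1)\in\operatorname{rowspan}(A)$, and the identification $\operatorname{rowspan}(A)=(\operatorname{im} B)^{\perp}$---so nothing further is needed.
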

Let $P={\rm Conv}(A)$ for a $d\times n $ integer matrix $A$. Motivated by Proposition \ref{faces} we define the following notations, for a face $\alpha$ of $P$ \begin{equation}
A_\alpha= \{ a_i\; | \;a_i \in A \cap \alpha\}, \;\; \; B_{\alpha}= \{b_i\; |\; a_i \notin A_\alpha \}. \label{eq:A_alpha__B_alpha}
\end{equation} Using a slight abuse of notation we let $A_\alpha$ (resp.~$B_\alpha$) denote both the sets above and also the matrices with columns $a_i$ (resp.~with rows $b_i$). We will also let $\mathfrak{I}_\alpha$ be the set of integer indices of the rows of $B_\alpha$.

In the case where $X_A\subset \p^{n-1}$ is a codimension two projective toric variety we can give a more specific description of the faces of $P$. In this case $A$ is an $(n-2) \times n$ integer matrix. For any proper face $\alpha$ of $P$ we have that $A_\alpha$ has either $\dim \alpha +1$ or $\dim \alpha + 2$ lattice points; otherwise we would contradict the assumption that all $b_i$ are non-zero. In the first of these cases $\alpha$ will be a simplex. Following \cite{DS} we make the following definition which, in terms of the $B$-matrix, singles out the faces of $P$ which are not simplices.

\begin{definition}
A line through $0$ in $\R^2$ is said to be a \textit{relevant line} if it contains two vectors $b_r,b_s$ in opposite directions.
\end{definition}

\begin{proposition} \label{proposition:faceDimension}
Let $X_A\subset \p^{n-1}$ be a projective toric variety and let $P={\rm Conv}(A) $ be the associated polytope. Let $\beta$ be a face of $P$. If all rows of $B_\beta$ are contained in the same relevant line then $\dim \beta = n - |B_\beta| -2$. If not then $\dim \beta = n - |B_\beta| -1$, in which case $\beta$ is a simplex.
\end{proposition}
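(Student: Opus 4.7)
The plan is to compute $\dim\beta$ via rank-nullity applied to the matrix $A_I$ (where $I=\{i: a_i\in A_\beta\}$, so that $|I|=|A_\beta|=n-|B_\beta|$), and then to reinterpret its nullity as a rank condition on $B_\beta$ via Gale duality.

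First I would use the standing assumption that $(1,\ldots,1)$ lies in the row space of $A$: this means every column of $A$ lies on a common affine hyperplane not passing through the origin, so any linear dependence among the columns of $A_I$ is automatically an affine dependence, and consequently $\dim\beta = \mathrm{rank}(A_I)-1$. By rank-nullity, the problem reduces to computing $\dim\ker(A_I)$, where $\ker(A_I)\subset\R^I$ denotes the space of tuples $(\lambda_i)_{i\in I}$ with $\sum_{i\in I}\lambda_i a_i=0$.

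Next I would apply Gale duality. Extending any such $(\lambda_i)$ by zero on $I^c$ produces a vector in $\ker A\subset\R^n$, which by construction equals the image of the map $\R^2\to\R^n$ sending $v\mapsto Bv$. Since $Bv$ has $i$-th coordinate $b_i\cdot v$, such a vector is supported on $I$ precisely when $b_i\cdot v=0$ for every $i\in I^c$. Hence $\dim\ker(A_I)$ equals the dimension of the common kernel of the functionals $\{v\mapsto b_i\cdot v : i\in I^c\}$ on $\R^2$, which is $2-\mathrm{rank}(B_\beta)$. Combining yields the key identity
\[ \dim\beta \;=\; |A_\beta|+\mathrm{rank}(B_\beta)-3 \;=\; n-|B_\beta|+\mathrm{rank}(B_\beta)-3.\]

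Since the rows of $B_\beta$ live in $\R^2$, the proof concludes by a case split on $\mathrm{rank}(B_\beta)\in\{1,2\}$. If $\mathrm{rank}(B_\beta)=1$, all rows of $B_\beta$ lie on a single line through the origin; Proposition~\ref{faces} supplies strictly positive coefficients $t_i$ with $\sum_{i\in I^c}t_ib_i=0$, and combined with the standing hypothesis that every $b_i$ is nonzero this forces vectors in both directions on that line, so it is a relevant line and $\dim\beta=n-|B_\beta|-2$. If instead $\mathrm{rank}(B_\beta)=2$, the rows cannot all be collinear and in particular do not all lie on a common relevant line; the formula gives $\dim\beta=n-|B_\beta|-1$, so $|A_\beta|=\dim\beta+1$, which forces $A_\beta$ to be affinely independent and $\beta$ to be a simplex. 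I expect the only conceptually nontrivial step to be the Gale-duality identification in the second paragraph; the remainder is dimension counting and the small check that collinearity of the $b_i$ on $I^c$ automatically makes the line relevant.
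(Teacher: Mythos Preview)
Your proof is correct and takes a genuinely different route from the paper's. The paper argues combinatorially: it fixes a maximal chain $\beta=\beta_0\subset\beta_1\subset\cdots\subset\beta_r\subset P$ and tracks how many rows of $B$ are peeled off at each step, using the Gale-dual criterion for faces to control the size of $B_{\beta_r}$ at the top of the chain and the jump sizes along the way. Your argument bypasses the face lattice entirely and goes straight to a rank computation: the identity $\dim\beta = n-|B_\beta|+\mathrm{rank}(B_\beta)-3$ drops out of rank--nullity for $A_I$ together with the observation that $\ker(A_I)$, after extension by zero, is exactly the image under $B$ of the annihilator of the rows of $B_\beta$. The two cases of the proposition then correspond cleanly to $\mathrm{rank}(B_\beta)\in\{1,2\}$, and your use of Proposition~\ref{faces} to force the rank-$1$ line to be \emph{relevant} is the only place any Gale-dual combinatorics enters.

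Your approach is more direct and arguably more transparent: it isolates the single linear-algebra identity from which both cases follow, and it makes the simplex conclusion in the second case immediate from the count $|A_\beta|=\dim\beta+1$. The paper's chain argument, on the other hand, gives a little more structural information about how $|B_\beta|$ changes along flags of faces, which is in the spirit of the later volume computations even if it is not strictly needed here. One minor point worth making explicit in your write-up: the identification $\ker A=\mathrm{im}\,B$ over $\R$ uses that $B$ has full column rank~$2$, which follows from $A$ having rank $n-2$; and the case $\mathrm{rank}(B_\beta)=0$ is excluded because for a proper face $B_\beta$ is nonempty and every $b_i$ is assumed nonzero.
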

\begin{proof}
Assume all $b_i$ are contained in a relevant line. Let $\beta=\beta_0 \subset \beta_1 \subset \cdots \subset \beta_r \subset P$ be a maximal chain of face inclusions. Since all $b_i$ are relevant we see that we can remove one $b_i$ from $\beta_0$ to get to the face $\beta_1$, remove two $b_i$ to get $\beta_2$ and so on. Thus $|B_{\beta_r}| = |B_\beta|-r$. By Proposition \ref{faces} a facet with points from a relevant line necessarily  has $2$ lattice points. Hence $|B_\beta|=r+2$ and $\dim \beta = (n-4)-r=n-4-(|B_\beta|-2)=n-|B_\beta|-2$.

Assume that not all $b_i$ are contained in the same relevant line. By a similar argument as above we can consider a maximal  inclusion of faces. Either the facet $\beta_r$ has $3$ lattice points, in which case $\dim \beta = n -|B_\beta| -1$, or it has two, which then has to be contained in a relevant line. However now there has to be an inclusion $\beta_i \subset \beta_{i+1}$ such that all lattice points of $\beta_{i+1}$ are in the relevant line, but not all in $\beta_i$. By Gale duality we must have that $|\beta_i| -|\beta_{i+1}| \geq 2$. From this it follows that $\dim \beta = n - |B_\beta| -1$. Since $\beta$ has $n-|B_\beta|$ lattice points it is a simplex.
\end{proof}

\begin{example}
For the matrix $A$ in Example \ref{runningexample} the corresponding Gale dual matrix $B$ is given by
\[ B=\begin{bmatrix} 1 & 0 \\ 0 & 1 \\ 2 & 2 \\ -4 & 0 \\ 1 & -3 \end{bmatrix},\]
note that $A\cdot B=0$. We see that the span of $(1,0)$ is a relevant line which corresponds to the fact that the edge $e_1$ in Figure \ref{figure:S2Ex} has three lattice points, instead of two.\label{runningexampleBdeg}
\end{example}

\begin{definition}
For a Gale dual matrix $B$ of an $(n-2)\times n$ integer matrix $A$, we define the notation $[i,j]:=\det(b_i,b_j)$, where $b_\ell$ denotes the $\ell^{th}$ row of $B$.
\end{definition} 
\begin{proposition} \label{proposition:dualMatrix}
Let $B$ be a given $2 \times n$ matrix such that the rows of $B$ span $\Z^2$ over $\Z$. Assume without loss of generality that $[1,2] \neq 0$. Then B is the Gale dual of the matrix
\[ A= \begin{bmatrix} [2,3] & -[1,3] & [1,2] & 0 & 0 & \cdots & 0 \\
[2,4] & -[1,4] & 0 & [1,2] & 0 &  \cdots & 0 \\
[2,5] & -[1,5] & 0 & 0 & [1,2] & \cdots & 0 \\
\vdots & \vdots & 0 & 0 & \ddots & \ddots  & 0 \\ 
[2,n-1] & -[1,n-1] & 0 & 0 &  \cdots & [1,2] & 0 \\
1 & 1 & 1 & 1 & & 1 \cdots & 1 \\\end{bmatrix} .\]
\end{proposition}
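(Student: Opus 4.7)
The plan is to verify (i) $A \cdot B = 0$, (ii) $\mathrm{rank}(A) = n-2$, and (iii) the lattice $\Z B$ is saturated in $\Z^n$; together these give $\mathrm{im}(B) = \ker(A)$ both as $\R$-subspaces and as sublattices of $\Z^n$, which is exactly what it means for $B$ to be the Gale dual of $A$.

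First, I would verify $AB = 0$ row by row. For each row $k \in \{1,\ldots,n-3\}$ of $A$ the only nonzero entries lie in positions $1,2,k+2$, so the corresponding row of $AB$ equals $[2,k+2]\,b_1 - [1,k+2]\,b_2 + [1,2]\,b_{k+2}$, viewed as an equation in $\R^2$ with $b_i$ the $i$-th row of $B$. This is the standard Plücker (cofactor) identity: since $[1,2]\neq 0$, Cramer's rule gives $b_{k+2} = \frac{[k+2,2]}{[1,2]} b_1 + \frac{[1,k+2]}{[1,2]} b_2$, and clearing denominators yields exactly the vanishing above. For the all-ones row of $A$ the entry of $AB$ in column $j$ is $\sum_{i=1}^n b_{ij}$; this vanishes because, under the standing hypothesis of Section \ref{section:background} that $(1,\ldots,1)$ lies in the row space of $A$, every element of $\ker A$ — in particular every column of $B$ — has coordinate sum zero.

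Next I would check the rank statement. The submatrix of $A$ formed by rows $1,\ldots,n-3$ and columns $3,\ldots,n-1$ equals $[1,2]\cdot I_{n-3}$ and so has rank $n-3$; the all-ones last row is independent of these (it has a $1$ in column $n$, where all earlier rows vanish), so $\mathrm{rank}(A) = n-2$ and $\dim_\R \ker(A) = 2$. Since the rows of $B$ span $\Z^2$, at least one minor $[i,j]$ is nonzero and $B$ has rank $2$; combined with $AB = 0$ this forces $\mathrm{im}_\R(B) = \ker_\R(A)$. For the lattice statement, the assumption that the rows of $B$ span $\Z^2$ is equivalent to the $2\times 2$ minors of $B$ having greatest common divisor $1$, i.e., to $\Z B$ being a saturated sublattice of $\Z^n$. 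Since $\ker(A)\cap \Z^n$ is automatically saturated of rank $2$ and contains $\Z B$, saturation forces the two lattices to coincide.

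The main obstacle is not algebraic depth but careful bookkeeping: one must recognize that the column-sum-zero property of $B$ is implicit in the paper's toric conventions (and is what makes the last row of $AB$ vanish), and one must identify the condition ``rows of $B$ span $\Z^2$'' with saturation of $\Z B$ in order to upgrade the equality of $\R$-spans to an equality of lattices. Once those two points are recorded, the verification reduces to the Plücker identity for three vectors in $\R^2$ together with a routine rank computation.
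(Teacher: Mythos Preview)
Your proof is correct and takes essentially the same route as the paper: both verify $AB=0$ and then use the hypothesis that the rows of $B$ span $\Z^2$ to force $\Z B = \ker(A)\cap\Z^n$, you via the language of saturation and the paper by a direct change-of-basis computation (writing the columns of $B$ in a $\Z$-basis of $\ker(A)\cap\Z^n$ and showing the change-of-basis determinant divides every $2\times 2$ minor of $B$, hence equals $\pm 1$). One phrasing caveat: your justification for the all-ones row of $AB$ vanishing reads circularly as written (it already assumes the columns of $B$ lie in $\ker A$), but you correctly diagnose in your final paragraph that what is actually being used is the standing convention $\sum_i b_i = 0$ coming from $B$ being a Gale dual in the toric setup---an assumption the paper also relies on tacitly when it simply asserts $AB=0$.
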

\begin{proof}
Writing out the matrix multiplication we see that $AB=0$, hence $\im(B) \subseteq \ker(A)$. Letting $v,w$ be generators of $\ker(A)$ we see that the columns $c_1,c_2$ of $B$ have to be of the form
\[ c_1 = pv+qw ,\;\;\; c_2 = sv+tw \;\;\; {\rm with}\;\;\; D=\begin{vmatrix} p & q \\ s & t \end{vmatrix} \neq 0.\]
A computation shows that every $2 \times 2$ minor of $B$ will have $D$ as a factor. Since the rows of $B$ span $\Z^2$ there must exist vectors $v,w$ in the rowspan of $B$ with $\det(v,w)=1$. Write $v = \sum_{i=1}^n a_ib_i, w=\sum_{i=1}^n c_ib_i$. Then $1 = \det(v,w) = \sum_{i,j=1}^n a_ic_j \det(b_i,b_j)$,  hence $|D|$ must be a factor of $1$ thus $|D| =1$ and the columns of $B$ form a basis of $\ker(A)$.
\end{proof}

\subsection{Computing Lattice Indices}\label{subsection:latticeIndex}
Lattice indices appear frequently in the main results presented in \S\ref{section:MainResults}. Our primary motivation in \S\ref{section:MainResults} is to provide effective formula to compute the invariants discussed in \S\ref{subsection:polarDegreesA}; hence we require explicit methods for lattice index computation. Consider a $d\times n$ integer matrix $A$ of full rank $d$. Let $\Z A$ denote the integer span of the columns of the matrix $A$. We wish to compute the lattice index $[\Z^d:\Z A]$. 
\begin{proposition}Let $A$ be an $d\times n$ integer matrix with full rank, ${\rm rank}(A)=d$. Also let $M_A$ denote the $d\times d$ integer matrix specified by the non-zero columns of the Hermite normal form of $A$ (computed by elementary integer column operations on $A$). We have that $$
[\Z^d:\Z A]=\det(M_A). 
$$\label{proposition:LatticeInd1}
\end{proposition}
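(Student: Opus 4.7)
The plan is to reduce the claim to two well-known facts from integer linear algebra: (a) right multiplication by a unimodular matrix in $\mathrm{GL}_n(\Z)$ preserves the $\Z$-column span, and (b) for a full-rank integer matrix $M \in \Z^{d \times d}$, the index of the sublattice $\Z M \subset \Z^d$ equals $|\det M|$. Combined with the normalization built into the Hermite normal form, these two facts give exactly the stated equality.

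First I would observe that the elementary integer column operations (column swaps, multiplication of a column by $\pm 1$, and adding an integer multiple of one column to another) used to produce the column Hermite normal form correspond precisely to right multiplication by a unimodular matrix $U \in \mathrm{GL}_n(\Z)$. Since both $U$ and $U^{-1}$ have integer entries, every column of $AU$ is a $\Z$-linear combination of the columns of $A$, and conversely. Hence $\Z A = \Z(AU)$ as sublattices of $\Z^d$. Applying this to the sequence of column operations that brings $A$ into its Hermite normal form $[\,M_A \mid 0\,]$ (where $M_A$ is a $d \times d$ block, since $\mathrm{rank}(A)=d$ forces exactly $d$ non-zero columns in the reduced matrix), I obtain $\Z A = \Z M_A$.

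Next, because $M_A$ is a $d \times d$ integer matrix whose columns are linearly independent (they are the nonzero columns of the HNF of a rank-$d$ matrix), they form a $\Z$-basis of the full-rank sublattice $\Z A \subseteq \Z^d$. I would then invoke the standard identity $[\Z^d : \Z M_A] = |\det M_A|$, which can be proved either via the Smith normal form $M_A = U S V$ with $U,V \in \mathrm{GL}_d(\Z)$ and $S$ diagonal, so that $[\Z^d : \Z M_A] = \prod_i s_{ii} = |\det S| = |\det M_A|$, or by interpreting $|\det M_A|$ geometrically as the covolume of $\Z M_A$ in $\R^d$. Under the usual Hermite normal form convention the diagonal entries of $M_A$ are positive, so $\det(M_A) > 0$ and the absolute value may be dropped, giving $[\Z^d : \Z A] = \det(M_A)$.

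The only delicate point will be matching conventions: one must verify that the particular column-operations version of HNF used in the statement produces an $M_A$ with positive determinant, so that no absolute value is needed in the final formula. Otherwise the underlying mathematical content is entirely standard and requires no new ideas.
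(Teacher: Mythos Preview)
Your proposal is correct and follows essentially the same route as the paper: both argue that the Hermite normal form has exactly $d$ non-zero columns (by the rank hypothesis), identify $\Z A$ with $\Z M_A$, and then invoke the standard fact that the index of a full-rank sublattice $\Z M_A \subset \Z^d$ equals $|\det M_A|$. The only difference is presentational: you spell out both steps (the unimodular-invariance of the column span and a Smith-normal-form justification of the index formula), whereas the paper dispatches the second step by citing a textbook corollary.
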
\begin{proof}
The matrix $A$ has rank $d$ (over $\Z$), this implies that the column space is spanned by $d$ vectors, and hence when we perform the elementary integer column operations to compute the Hermite normal form we will retain only $d$ non-zero columns. The matrix $M_A$ is then a square matrix whose entries are the coefficients of $\Z A$ in the standard basis for $\Z^d$, by \cite[Corollary~9.63]{AMA} the conclusion follows. 
\end{proof}
A second way to compute the lattice index $[\Z^d:\Z A]$ is given by the following proposition. 
\begin{proposition} \label{proposition:indexgcd}
Let $A$ be an $d \times n$ integer matrix with full rank, ${\rm rank}(A)=d$. Let $v={n \choose d}$ and let $c_1,\dots, c_v$ be the $v$ maximal (that is $d\times d$) minors of $A$. Then we have that 
\[ [\Z^d:\Z A]=\gcd(c_1,\dots, c_v). \]
\end{proposition}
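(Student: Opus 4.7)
The plan is to reduce the claim to Proposition \ref{proposition:LatticeInd1} by showing that the gcd of the maximal minors of $A$ is an invariant under the elementary integer column operations one uses to compute the Hermite normal form. By Proposition \ref{proposition:LatticeInd1}, $[\mathbb{Z}^d : \mathbb{Z}A] = \det(M_A)$, where $M_A$ is the $d \times d$ non-zero block extracted from the column Hermite normal form. So it suffices to prove that $\gcd(c_1, \dots, c_v) = \det(M_A)$, where $c_1,\dots,c_v$ are the $d \times d$ minors of $A$.

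First I would verify invariance of $\gcd$ of the maximal minors under each of the three elementary integer column operations. Swapping two columns or negating a column only changes signs of some of the $d \times d$ minors, so it leaves the gcd unchanged. For the operation ``replace column $c_j$ by $c_j + k c_\ell$'' with $k \in \mathbb{Z}$, multilinearity of the determinant shows that each new $d \times d$ minor equals the corresponding old minor plus $k$ times another $d \times d$ minor of $A$; hence the old gcd divides every new minor, and since this operation is invertible by another of the same form, the two gcds are equal.

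Next I would compute the gcd directly on the Hermite normal form. After column reduction, $A$ becomes the $d \times n$ matrix $[M_A \mid 0 \mid \cdots \mid 0]$. Any $d \times d$ submatrix either equals $M_A$ or uses one of the zero columns, in which case its determinant vanishes. Hence the gcd of the maximal minors of the reduced matrix is exactly $\det(M_A)$. Combining this with the invariance established above yields $\gcd(c_1,\dots,c_v) = \det(M_A) = [\mathbb{Z}^d : \mathbb{Z}A]$, as claimed.

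The only technical point is the multilinearity argument for column replacement, and this is completely routine. An alternative (perhaps cleaner) route is to invoke the Smith normal form of $A$: the $d$-th determinantal divisor of $A$, defined as the gcd of all $d \times d$ minors, equals the product of the invariant factors of $A$, which in turn equals $[\mathbb{Z}^d : \mathbb{Z}A]$ by the structure theorem for finitely generated abelian groups; either route establishes the proposition.
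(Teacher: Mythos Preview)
Your proof is correct and follows essentially the same approach as the paper: both reduce to Proposition~\ref{proposition:LatticeInd1} by showing that the gcd of the maximal minors is invariant under the elementary integer column operations used to compute the Hermite normal form, and then observe that on the reduced matrix $[M_A\mid 0\cdots 0]$ this gcd is just $\det(M_A)$. The only cosmetic difference is that the paper packages the invariance step as preservation of the $d$-th Fitting ideal under elementary operations (citing a reference), whereas you verify it directly via multilinearity of the determinant; your alternative route via Smith normal form and determinantal divisors is also perfectly valid.
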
\begin{proof}
The $d \times d$ minors of $A$ generate what is called the $d^{th}$ Fitting ideal of $A$, $\rm{Fit}_d(A)$. By \cite[Lemma~11.2.1]{Keating} we have that the Fitting ideal is preserved by elementary (integer in our case) row or column operations on $A$, i.e.~we have that $\rm{Fit}_d(A)=\rm{Fit}_d(\rm{Herm}(A))$ where $\rm{Herm}(A)$ is the (column-wise) Hermite normal form of $A$. Since $\Z$ is a principal ideal domain $\rm{Fit}_d(A)$ is generated by $\gcd(c_1,\dots, c_v)$, and by Proposition \ref{proposition:LatticeInd1} we have that $\rm{Fit}_d(\rm{Herm}(A))$ is generated by $[\Z^d:\Z A]$; this gives the stated result. 
\end{proof}
We note that the second method to compute $[\Z^d:\Z A]$ is less computationally efficient, but could still be convenient in some cases. 

\begin{remark} \label{remark:gcdB}
Assume that $X_A\subset \p^{n-1}$ is a toric variety of codimension two and let $B$ be a Gale dual of $A$. Then, since the rows of $B$ span $\Z^2$ \cite[pg. 4]{DS}, we have by Proposition \ref{proposition:indexgcd} that $\gcd(\{[i,j]\}_{i,j}) = 1$.
\end{remark}
\begin{remark} \label{remark:gcdPrimitive}
We say that an integer vector $v$ in $\Z^l$ is a \textit{primitive vector} if $v$ is not a non-trivial integer multiple of another integer vector in $\Z^l$. Let $B$ be a $n\times 2$ integer matrix whose rows span $\Z^2$ and let $v$ be a primitive vector in $\Z^2$. It is well known that we can choose a basis for $\Z^2$ consisting of $v$ and another vector $w$, such that $\det(v,w)=1$. Using this basis we can write $b_i = a_i v + c_i w$ for some integers $a_i,c_i$. Then 
\[ \det(v,b_i) = c_i,\;\;\; {\rm and } \;\;\; \det(b_i,b_j) = a_ic_j-a_jc_i.\]
Note that if $\gcd(\det(v,b_i))= \gcd(c_i) > 1$ then $\gcd([i,j])>1$, which contradicts the assumption that the rows of $B$ span $\Z^2$ by Remark \ref{remark:gcdB}. Hence $\gcd(\det(v,b_i)) = 1$.
\end{remark}

\begin{proposition} \label{proposition:latticeIndexA}
Let $A$ be the $d\times n$ integer matrix from Proposition \ref{proposition:dualMatrix}. Then we have that $[\Z^d:\Z A]=[1,2]^{n-4}$.
\end{proposition}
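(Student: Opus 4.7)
My plan is to apply Proposition \ref{proposition:indexgcd} and to compute every maximal minor of $A$ explicitly. The central claim is that for each pair $\{p,q\}\subset\{1,\ldots,n\}$ with $p<q$, the $(n-2)\times(n-2)$ minor of $A$ obtained by deleting columns $p$ and $q$ equals $\pm [1,2]^{n-4}[p,q]$. Once this is established, Remark \ref{remark:gcdB} gives
\[
\gcd_{\{p,q\}}\det(A_{[n]\setminus\{p,q\}}) \;=\; [1,2]^{n-4}\cdot\gcd_{\{p,q\}}[p,q]\;=\;[1,2]^{n-4},
\]
and Proposition \ref{proposition:indexgcd} concludes that $[\Z^d:\Z A]=[1,2]^{n-4}$.

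I would verify the central claim by case analysis on $\{p,q\}$. Whenever $n\notin\{p,q\}$, the retained submatrix still contains column $n$ of $A$, which is zero except for a single $1$ at the bottom. Expanding along that column reduces the problem to an $(n-3)\times(n-3)$ determinant. After permuting rows and columns to push the two ``deficient'' rows (those whose $[1,2]$-entry sat in one of the deleted columns) to the bottom and the two columns $1,2$ of $A$ to the right, the matrix becomes block upper triangular with a top-left diagonal block of $[1,2]$'s and a small bottom-right block whose determinant can be read off. When $\{p,q\}\subseteq\{3,\ldots,n-1\}$ the bottom-right block is
\[
\begin{pmatrix}[2,p]&-[1,p]\\ [2,q]&-[1,q]\end{pmatrix},
\]
with determinant $[1,p][2,q]-[2,p][1,q]=[1,2][p,q]$ by the Pl\"ucker identity $[1,k][2,l]-[1,l][2,k]=[1,2][k,l]$ applied to the rows $b_1,b_2,b_p,b_q$ of $B$. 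The cases $\{p,q\}=\{1,2\},\{1,m\},\{2,m\}$ with $m\le n-1$ are handled the same way after replacing the appropriate row of the small block.

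The delicate cases are those with $n\in\{p,q\}$, since column $n$ is no longer available as a sparse expansion pivot and the all-ones bottom row of $A$ appears intact in the submatrix. I would handle these by the same block reordering followed by a Schur-complement step that clears the bottom-left $1$s against the $[1,2]\cdot I$ block, giving a $2\times 2$ Schur determinant. Direct expansion plus the same Pl\"ucker identity reduces this determinant, in the case $\{p,q\}=\{p,n\}$, to $\pm [1,2]^{n-4}\sum_{j=1}^{n-1}[j,p]$. The final collapse to $\pm [1,2]^{n-4}[p,n]$ uses the identity
\[
\sum_{j=1}^{n}[j,p]\;=\;\det\!\left(\sum_{j=1}^{n}b_j,\;b_p\right)\;=\;0,
\]
valid because the rows of any Gale dual $B$ of the matrix $A$ of Proposition \ref{proposition:dualMatrix} sum to zero (they must be annihilated by the all-ones last row of $A$); hence $\sum_{j=1}^{n-1}[j,p]=-[n,p]=[p,n]$, and the minor is $\pm [1,2]^{n-4}[p,n]$ as required.

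The main obstacle is precisely this last step: the block and Schur-complement computations are mechanical, but recognising that the resulting $(n-1)$-term sum of brackets equals the single bracket $[p,n]$ via $\sum_i b_i=0$ is what genuinely ties in the Gale-dual hypothesis and causes the gcd of the maximal minors to collapse onto the clean power $[1,2]^{n-4}$ rather than a strictly larger divisor.
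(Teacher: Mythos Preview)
Your proposal is correct and follows essentially the same approach as the paper: both compute every maximal minor of $A$, show each one equals $\pm[1,2]^{n-4}[p,q]$ via elementary row/column manipulations (your Schur-complement step is just a repackaging of the paper's ``row and column operations''), invoke the Pl\"ucker relation and the identity $\sum_i b_i=0$, and finish with Proposition~\ref{proposition:indexgcd} and Remark~\ref{remark:gcdB}. The only cosmetic difference is the organisation of the case split---the paper sorts minors by how many of the ``sparse'' columns $3,\ldots,n$ survive, whereas you sort by whether column $n$ is deleted---but the computations and the key identities are identical.
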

\begin{proof}
Let $S$ denote the set of the last $n-4$ columns of the matrix $A$ from Proposition \ref{proposition:dualMatrix}; we will now apply Proposition \ref{proposition:indexgcd}. Each maximal minor of $A$ is the determinant of a $(n-2) \times (n-2)$ matrix $\mathfrak{m}$. This matrix $\mathfrak{m}$ will have at least $(n-4)$ columns coming from the set $S$, i.e.~$\mathfrak{m}$ will have at least $n-4$ columns with only two non-zero entries.

If $\mathfrak{m}$ has $(n-2)$ vectors from $S$ then $\mathfrak{m}$ is a lower triangular matrix and $\det(\mathfrak{m})=[1,2]^{n-3}$. If $\mathfrak{m}$ has $(n-3)$ vectors from $S$ then $\mathfrak{m}$ has one of the vectors $a_1$ or $a_2$ as a column. Performing determinant preserving row and column operations on $\mathfrak{m}$ yields a diagonal matrix, from this we obtain one of the following: $$\det(\mathfrak{m})=\pm[1,2]^{n-4}[2,i],\; \; {\rm or}\;\; \det(\mathfrak{m})=\pm[1,2]^{n-4}[1,i], \;{\rm for\;} i=3,...,n-1, \;\; {\rm or}$$ $$\det(\mathfrak{m})=\pm[1,2]^{n-4}\left( [1,2]-\sum_{i=3}^{n-1}[2,i] \right),\;\;{\rm or}\;\;\det(\mathfrak{m})=\pm[1,2]^{n-4}\sum_{i=2}^{n-1}[1,i] $$ with the choice depending on which of $a_1$ or $a_2$ appears in $\mathfrak{m}$ and on which column in $S$ does not appear in $\mathfrak{m}$.

Observe that since $\sum_{i=1}^n b_i = 0$ then, by elementary properties of determinants, we have that the two last cases can be rewritten as:
 $$\det(\mathfrak{m})=\pm[1,2]^{n-4}\left( [1,2]-\sum_{i=3}^{n-1}[2,i] \right) = \pm[1,2]^{n-4} (\sum_{i=1}^{n-1}[i,2]) = \pm[1,2]^{n-4}[2,n] , \;{\rm and}$$ 
$$\det(\mathfrak{m})=\pm[1,2]^{n-4}\sum_{i=2}^{n-1}[1,i] = \pm[1,2]^{n-4} [n,1] .$$

In the final case, if $\mathfrak{m}$ has $(n-4)$ vectors from $S$, then both $a_1,a_2$ appear as columns of $\mathfrak{m}$. Let $k_1,k_2$ be the indices of the columns vectors from $S$ which do not appear in $\mathfrak{m}$; note $k_1,k_2\in \left\lbrace 3,\dots,n \right\rbrace$. The current case has two subcases, first the situation where $k_1\neq k_2\neq n$ and second the situation where one of $k_1$ or $k_2$ is equal to $n$.  We may again perform elementary row and column operations to obtain a diagonal matrix. In the first subcase, where $k_1\neq k_2\neq n$, this computation gives: $$\det(\mathfrak{m})=\pm[1,2]^{n-5}(-[2,k_1][1,k_2]+[1,k_1][2,k_2]),\;k_1\neq k_2, \; k_1,k_2\in \left\lbrace3,4,\dots,n-1\right\rbrace.  $$
By the Pl\"{u}cker relation defining $G(2,4) \subset \p^5$ we have that $$-[2,k_1][1,k_2]+[1,k_1][2,k_2]=[1,2][k_1,k_2].$$ Hence $\det(\mathfrak{m})=[1,2]^{n-4}[k_1,k_2]$, where $3 \leq k_1 \neq k_2 \leq n-1$. Now consider the second subcase, that is the case where one of $k_1$ or $k_2$ is equal to $n$ (i.e~where the $n^{th}$ column of $A$ does not appear in $\mathfrak{m}$). Suppose (without loss of generality) that $k_2=n$, then we have that:
 $$
 \det(\mathfrak{m})=
 \pm[1,2]^{n-5}\left([1,k_1]\sum_{i=3,\;i\neq k_1}^{n-1}[2,i]-[2,k_1]\sum_{i=3,\;i\neq k_1}^{n-1}[1,i]-[1,2]([1,k_1]+[2,k_1])  \right)\;  .$$Again applying the Pl\"{u}cker relations as above we have that \begin{align*}
 \det(\mathfrak{m})=&
 \pm[1,2]^{n-5}\left([1,2]\sum_{i=3,\; i\neq k_1}^{n-1}[k_1,i]-[1,2]([1,k_1]+[2,k_1])  \right)\;  \\ 
 =& \pm[1,2]^{n-4}\left(\sum_{i=3,\; i\neq k_1}^{n-1}[k_1,i]-[1,k_1]-[2,k_1]  \right) \\
 =& \pm[1,2]^{n-4} \sum_{i=1}^{n-1}[k_1,i]  = \pm[1,2]^{n-4}[k_1,n]   .
 \end{align*}Putting all the cases together we see that the maximal minors of $A$ all have values $ \pm [1,2]^{n-4}[i,j]$ for some $i \neq j$. Hence their greatest common divisor equals $[1,2]^{n-4} \gcd([i,j])$. By Remark \ref{remark:gcdB} we have that $\gcd([i,j])=1$; the conclusion follows.
\end{proof}

\subsection{Other Results Needed for Gale Dual Computations}\label{subsection:miscResults}

In this subsection we collect some results on the degrees of lattice ideals, these results will be needed in \S\ref{section:MainResults}.

The degree of a codimension one lattice ideal is the degree of the defining polynomial.

\begin{proposition} \label{proposition:degreeCodim1}
The degree of a homogeneous lattice ideal $I_B$ associated to a $n \times 1$ matrix $B$ is given by
\[ \deg(I_B) =  \sum_{i | b_i > 0 } b_i .\]
\end{proposition}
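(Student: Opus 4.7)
The plan is to show that $I_B$ is a principal ideal with generator $f := x^{b^+} - x^{b^-}$, and then invoke the sentence immediately preceding the proposition, which says that the degree of a codimension-one lattice ideal equals the degree of its defining polynomial.

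First, since $B$ is an $n\times 1$ matrix, the lattice $\Z B$ has rank one: every element of $\Z B$ has the form $kb$ for some $k \in \Z$, where $b=(b_1,\ldots,b_n)$ is the column of $B$. A routine unwinding of signs shows that $(kb)^+ = k\, b^+$ and $(kb)^- = k\, b^-$ when $k \ge 0$, while the roles of $b^+$ and $b^-$ are exchanged when $k<0$. Thus, for each $k$, the binomial $x^{(kb)^+} - x^{(kb)^-}$ equals $\pm\bigl((x^{b^+})^{|k|} - (x^{b^-})^{|k|}\bigr)$, which is divisible by $f$ via the standard factorization of $y^m - z^m$. Together with the trivial inclusion $f \in I_B$, this yields $I_B = (f)$.

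Next, the homogeneity assumption on $I_B$ forces $\sum_i b_i = 0$, so $|b^+| = |b^-| = \sum_{i:b_i>0} b_i$, and $f$ is a homogeneous polynomial of exactly that degree. The proposition then follows immediately from the cited remark.

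The argument is essentially mechanical and I do not expect a serious obstacle. The one subtlety worth noting is that $b$ need not be primitive: if $d := \gcd(b_i) > 1$, then writing $b = db'$ gives $f = (x^{b'^+})^d - (x^{b'^-})^d$, which factors into $d$ distinct irreducible pieces. However, the scheme $V(I_B) \subset \p^{n-1}$ is still a hypersurface of degree $\deg(f) = d \cdot |b'^+| = |b^+|$, so the stated formula remains correct when $\deg(I_B)$ is read as the degree of the projective subscheme counted with multiplicity.
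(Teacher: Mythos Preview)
Your argument is correct and is precisely the elaboration of what the paper leaves implicit: the paper gives no proof of this proposition at all, merely preceding it with the one-line remark that ``the degree of a codimension one lattice ideal is the degree of the defining polynomial.'' Your proof supplies the missing verification that $I_B$ is principal with generator $x^{b^+}-x^{b^-}$ of degree $\sum_{i:b_i>0}b_i$, so the approaches coincide.
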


If now $I_B$ is a codimension two lattice ideal we define the following: For each $i,j$, if $b_i$ and $b_j$ lie in the interior of opposite quadrants, then define 
\[ \nu_{ij}:= \min \{ |b_{i1}b_{j2}|, |b_{i2}b_{j1}| \}. \] Let $\beta_i$ be the sum of all positive entries in the $i^{th}$ column of $B$. 
\begin{proposition} \cite[Corollary 2.2]{DS} \label{proposition:degreeBmatrix}
The degree of a homogeneous lattice ideal $I_B$ associated to a $n \times 2$ integer matrix $B$ is given by
\[ \deg(I_B) = \beta_1 \beta_2 - \sum_{i,j} \nu_{ij} .\]
\end{proposition}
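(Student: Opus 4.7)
The plan is to compute $\deg(I_B)$ via the standard polytope volume formula for the projective toric variety $X_A$ and then reduce everything to a two-dimensional combinatorial problem involving the rows of $B$. Since the rows of $B$ span $\Z^2$ by hypothesis, the lattice $\Z B=\ker(A)$ is saturated and $I_B$ coincides with the (prime) toric ideal of $X_A$, so $\deg(I_B)=\deg(X_A)$. The standard toric degree formula then reads
\[
\deg(X_A)\cdot [\Z^{n-2}:\Z A] = \Vol(\Conv(A)),
\]
where $\Vol$ is the lattice-normalized volume. Choosing coordinates so that $[1,2]\neq 0$, Proposition~\ref{proposition:latticeIndexA} gives $[\Z^{n-2}:\Z A]=[1,2]^{n-4}$, reducing the statement to the purely polytopal identity
\[
\Vol(\Conv(A)) = [1,2]^{n-4}\bigl(\beta_1\beta_2 - \sum_{i,j}\nu_{ij}\bigr).
\]

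Using the explicit form of $A$ from Proposition~\ref{proposition:dualMatrix}, the polytope $\Conv(A)$ (after affine projection to $\R^{n-3}$) has vertices $a_n=0$, $a_k=[1,2]\,e_{k-2}$ for $k=3,\dots,n-1$, together with the two ``apex'' points $a_1=([2,k])_{k=3}^{n-1}$ and $a_2=(-[1,k])_{k=3}^{n-1}$. I would decompose $\Conv(A)$ into the simplex on $\{0,[1,2]e_1,\dots,[1,2]e_{n-3}\}$, of normalized volume $[1,2]^{n-3}$, together with the two pyramidal ``caps'' introduced by $a_1$ and $a_2$. The factor $[1,2]^{n-4}$ then pulls out of each cap by scaling along the coordinate directions, reducing the remaining computation to a two-dimensional normalized area problem in the plane of the rows of $B$.

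This planar area splits naturally as $\beta_1\beta_2 - \sum_{i,j}\nu_{ij}$: the product $\beta_1\beta_2$ arises as the axis-aligned rectangle determined by the sum of positive entries in each column of $B$ (mirroring Proposition~\ref{proposition:degreeCodim1} applied to each column separately), while $\nu_{ij}=\min\{|b_{i1}b_{j2}|,|b_{i2}b_{j1}|\}$ subtracts the lattice overlap between the rectangular contributions of rows $b_i$ and $b_j$ lying in opposite quadrants, with the minimum selecting precisely the smaller of the two overlapping axis-parallel lattice rectangles. The Pl\"ucker identity $[i,j][k,\ell]=[i,k][j,\ell]-[i,\ell][j,k]$ used in the proof of Proposition~\ref{proposition:latticeIndexA}, together with the global relation $\sum_i b_i=0$, are the algebraic tools needed to combine the contributions of the two caps into this clean form.

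The main obstacle is the case analysis required for the planar area computation. One must carefully track the sign pattern of the $b_i$ (which open quadrant each row lies in), handle the boundary cases in which rows lie on the coordinate axes, and in particular deal with configurations in which several rows lie on a common relevant line, which correspond to the non-simplicial faces of $\Conv(A)$ characterized in Proposition~\ref{proposition:faceDimension}. Matching each geometric overlap region uniquely to the correct $\nu_{ij}$ and verifying that the sign bookkeeping works uniformly across all these configurations is the delicate part of the argument; everything else is an application of the tools already developed in \S\ref{subsection:workingWithGaleDual} and \S\ref{subsection:latticeIndex}.
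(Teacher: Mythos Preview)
The paper does not prove this statement at all: it is quoted verbatim from \cite[Corollary~2.2]{DS} and used as a black box. So there is no ``paper's own proof'' to compare against; the relevant comparison is with the original argument in \cite{DS}, which proceeds algebraically via the Hilbert--Burch resolution of a codimension-two lattice ideal and a Hilbert-series computation, not via polytope volumes.

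Your outline has two genuine gaps. First, the proposition is stated for an arbitrary homogeneous codimension-two lattice ideal $I_B$, with no assumption that the rows of $B$ span $\Z^2$; that hypothesis is imposed elsewhere in the paper only when $B$ arises as the Gale dual of a specific $A$. In the general case $I_B$ need not be prime, and Theorem~\ref{theorem:degreeBideal} shows the degree picks up the torsion factor $|T(\Z^n/\Z B)|$, which your reduction $\deg(I_B)=\deg(X_A)=\Vol(\Conv(A))/[\Z^{n-2}:\Z A]$ silently drops. Second, even in the prime case, the proposed decomposition of $\Conv(A)$ into the coordinate simplex plus two ``caps'' at $a_1,a_2$ is not justified: depending on the signs of the entries $[2,k]$ and $-[1,k]$, the points $a_1,a_2$ can lie inside the simplex, outside it on the same side, or on opposite sides, and the convex hull need not split as simplex-plus-pyramids. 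The subsequent claim that the cap volumes reduce, after scaling, to a single two-dimensional area in the $B$-plane is asserted but not argued; this is exactly the step where the combinatorics of $\beta_1\beta_2-\sum\nu_{ij}$ must emerge, and you have only described the answer, not derived it. The Hilbert--Burch approach in \cite{DS} avoids all of this geometry by reading the degree directly off the bidegrees of the syzygy matrix.
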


\begin{corollary}
If $I_B$ is a prime homogeneous lattice ideal associated to an integer matrix $B$ with Gale dual $A$ then 
\[ \Vol(\Conv(A)) = \beta_1 \beta_2 - \sum_{i,j} \nu_{ij} .\]
\end{corollary}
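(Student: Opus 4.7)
The plan is to identify $\deg(I_B)$ with $\Vol(\Conv(A))$ and then invoke Proposition \ref{proposition:degreeBmatrix} directly. The link comes from the fact that for a prime homogeneous lattice ideal $I_B$, the lattice $\Z B$ is saturated inside $\Z^n$, which is precisely the condition ensuring that $I_B$ agrees with the toric ideal $I_{X_A}$ of the projective toric variety $X_A$ cut out by the monomial parametrization determined by the Gale dual $A$. In particular the projective scheme defined by $I_B$ is $X_A \subset \p^{n-1}$, and so $\deg(I_B) = \deg(X_A)$.

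Next I would invoke the classical toric geometry statement that the degree of the embedded projective toric variety $X_A$ equals the normalized volume of its defining polytope $\Conv(A)$, where volume is normalized with respect to the lattice $\Z A$; this is standard and can be found, for example, in Sturmfels or in \cite{GKZ}. Hence $\deg(I_B) = \deg(X_A) = \Vol(\Conv(A))$.

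Combining the two equalities with Proposition \ref{proposition:degreeBmatrix} gives
\[
\Vol(\Conv(A)) \;=\; \deg(I_B) \;=\; \beta_1 \beta_2 - \sum_{i,j} \nu_{ij},
\]
which is the desired statement.

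The only subtle point is the passage from \emph{prime lattice ideal} to \emph{toric ideal of} $X_A$; this requires the saturation of $\Z B$, which is equivalent to primality of $I_B$. Everything else is standard machinery, so there is essentially no computational obstacle — the corollary is really a reinterpretation of Proposition \ref{proposition:degreeBmatrix} on the geometric side via the classical degree-equals-normalized-volume formula for projective toric varieties.
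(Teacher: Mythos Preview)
Your proposal is correct and matches the paper's intent: the corollary is stated in the paper without proof, immediately following Proposition~\ref{proposition:degreeBmatrix}, because it is exactly the reinterpretation you describe---primality of $I_B$ gives saturation of $\Z B$, hence $I_B$ is the toric ideal of $X_A$, and the standard degree-equals-normalized-volume formula finishes the job. There is nothing to add.
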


\begin{example}
For the $B$-matrix in Example \ref{runningexample}, all $\nu_{ij}$ equal zero, hence $\deg I_B = \beta_1 \beta_2 = 3 \cdot 4 = 12$.
\end{example}

\begin{definition}
For an inclusion $L \subset M$ of abelian groups, we define $T(M/L)$ to be the torsion subgroup and $|T(M/L)|$ to be its order.
\end{definition}

When proving our main results in \S\ref{section:MainResults} we will sometimes need to compute volumes of convex hulls of lattice points where some lattice points appear more than once. The following proposition shows that this is also expressible as a degree of a lattice ideal, hence the results above can be applied. 

\begin{theorem} \cite[Theorem 4.6]{degreeLatticeIdeals} \label{theorem:degreeBideal}
Given an integer matrix $B$ whose rows generate a $r$-dimensional lattice $\Z B \subset \Z^n$ and defining a codimension $r$ lattice ideal $I_B$, there exists a $(n-r) \times n$ matrix $A = [v_1,...,v_n]$ of rank $n-r$ such that $\Z B \subset \ker(A)$ and  
\[ \deg(I_B) = |T(\Z^n/\Z B)| \Vol(\Conv(0,v_1,...,v_n))\]
where the volume is normalized with respect to $\Z A$. 
\end{theorem}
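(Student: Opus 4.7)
The strategy is to reduce to the prime (saturated) case and then account for the multiplicity. Let $L := \Z B \subset \Z^n$ and let $L^{\mathrm{sat}} := (L \otimes_\Z \Q) \cap \Z^n$ be its saturation. From the short exact sequence
\[ 0 \longrightarrow L^{\mathrm{sat}}/L \longrightarrow \Z^n/L \longrightarrow \Z^n/L^{\mathrm{sat}} \longrightarrow 0 \]
and the fact that $\Z^n/L^{\mathrm{sat}}$ is torsion-free (being a quotient by a saturated sublattice), one reads off $L^{\mathrm{sat}}/L = T(\Z^n/L)$, so in particular $[L^{\mathrm{sat}}:L] = |T(\Z^n/L)|$. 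To produce the matrix $A$, pick an isomorphism $\Z^n/L^{\mathrm{sat}} \cong \Z^{n-r}$ (which exists precisely because $L^{\mathrm{sat}}$ is saturated of rank $r$), let $v_i \in \Z^{n-r}$ be the image of the $i^{th}$ standard basis vector of $\Z^n$, and set $A$ to be the $(n-r)\times n$ matrix with columns $v_1,\ldots,v_n$. By construction $\mathrm{rank}(A)=n-r$, $\ker(A)=L^{\mathrm{sat}} \supseteq \Z B$, and $\Z A = \Z^{n-r}$ is the full ambient lattice in which the volume is normalized.

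The proof then splits into two classical ingredients. First, $I_{L^{\mathrm{sat}}}$ is the toric ideal attached to $A$, cutting out the affine toric variety parametrized by $t \mapsto (t^{v_1},\ldots,t^{v_n})$; by the standard formula for the degree of an affine toric variety (Sturmfels, \emph{Gr\"obner Bases and Convex Polytopes}, Thm.~4.16) one has
\[ \deg(I_{L^{\mathrm{sat}}}) = \Vol\bigl(\Conv(0,v_1,\ldots,v_n)\bigr), \]
normalized with respect to $\Z A$. Second, the structure theorem for lattice ideals (Eisenbud--Sturmfels, \emph{Binomial Ideals}) asserts that $V(I_L)$ decomposes as a disjoint union of $[L^{\mathrm{sat}}:L]$ translates of $V(I_{L^{\mathrm{sat}}})$ under the action of the finite character group $\mathrm{Hom}(L^{\mathrm{sat}}/L, \C^*)$. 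Because the translating torus action preserves degree and all components have the same dimension, the degrees add to give
\[ \deg(I_L) \,=\, [L^{\mathrm{sat}}:L] \cdot \deg(I_{L^{\mathrm{sat}}}). \]

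Combining these with $[L^{\mathrm{sat}}:L]=|T(\Z^n/L)|$ yields the claimed formula. The main obstacle is the second step: one needs the non-prime lattice ideal to genuinely split into equal-degree torus translates of the prime ideal, which requires the Eisenbud--Sturmfels decomposition rather than anything elementary. Once that is granted, the first step is classical and the rest is bookkeeping. An alternative to avoid the structure theorem would be to pass to the big torus $(\C^*)^n$, observe that $I_L$ and $I_{L^{\mathrm{sat}}}$ cut out the same subtorus scheme but with multiplicity $[L^{\mathrm{sat}}:L]$, and then track this multiplicity through intersection with a generic linear subspace of complementary dimension, but the bookkeeping is essentially the same.
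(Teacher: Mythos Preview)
The paper does not prove this statement: it is quoted verbatim from \cite[Theorem 4.6]{degreeLatticeIdeals} and used as a black-box input to the proofs of Theorems~\ref{theorem:muPrelevant} and~\ref{theorem:muFromBNoRel}. There is thus no ``paper's own proof'' to compare your proposal against.

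That said, your sketch is a correct and standard route to the result. The two ingredients you invoke---Sturmfels' degree~$=$~normalized volume formula for the prime toric ideal $I_{L^{\mathrm{sat}}}$, and the Eisenbud--Sturmfels decomposition of $I_L$ into $[L^{\mathrm{sat}}:L]$ torus-translated copies of $I_{L^{\mathrm{sat}}}$---are exactly what is needed, and your identification $T(\Z^n/L)=L^{\mathrm{sat}}/L$ via the short exact sequence is the right bookkeeping. Your construction of $A$ (as the matrix of the projection $\Z^n \twoheadrightarrow \Z^n/L^{\mathrm{sat}}\cong\Z^{n-r}$) is also the natural choice and automatically gives $\Z A=\Z^{n-r}$, so the volume normalization matches. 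The only place to be slightly more careful is in the second ingredient: the Eisenbud--Sturmfels theorem gives a primary decomposition over an algebraically closed field, and one should note that each primary component is in fact prime (a translate of the toric ideal) so that degrees genuinely add without multiplicity corrections; you gesture at this but it is worth making explicit.
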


\section{Computing invariants of codimension two toric varieties}
\label{section:MainResults}
Let $X_A$ be a codimension two projective toric variety and let $B$ be the Gale dual of the matrix $A$. In this case $A$ is an $(n-2) \times n$ integer matrix and $B$ is an $n\times 2$ integer matrix. From the results in \S\ref{subsection:polarDegreesA} we see that to compute the Chern-Mather volumes, polar degrees, and the ED degree of $X_A$, we must compute both the normalized relative subdiagram volumes of all chains of faces and the normalized volumes of all faces of the polytope $P={\rm Conv}(A)$. In this section we present our main results. These results give explicit closed form expressions for the required normalized volume and normalized subdiagram volume computations in terms of the Gale dual matrix $B$. Both a theoretical analysis and practical testing shows that the methods using the matrix $B$ offer a quite substantial computational performance gain relative to the methods of \cite{HS} when ${\rm codim}(X_A)=2$, see \S\ref{section:computation} for a discussion of this.
\begin{example}
As discussed above our goal in this paper is to compute the polar degrees and ED degree (and other associated invariants) using the Gale dual matrix $B$ of $A$ when $X_A$ has codimension two. Continuing Example \ref{runningexample} we now summarize the volumes and subdiagram volumes of the faces of the polytope $P={\rm Conv}(A)$ from Figure \ref{figure:S2Ex} in Table \ref{table:faces}. In this section we will develop the necessary results to fill in this table using only the matrix $B$.
\begin{table}[h!]
\centering
\resizebox{0.6\linewidth}{!}{
\begin{tabular}{@{} *6c @{}}
\toprule 
 \multicolumn{1}{c}     {\color{Ftitle} $\alpha$} &   {\color{Ftitle}$B_\alpha$} &   {\color{Ftitle}$\Vol(\alpha)$} & {\color{Ftitle}$\mu(P,\alpha)$} &   {\color{Ftitle} $i(P,\alpha)$}  &   {\color{Ftitle} $\Eu(\alpha)$} \\ 
 \midrule 
${\color{pur1}e_1}$ & $\{ b_1,b_4 \}$ & 3 & 1 & 1 & 1 \\
${\color{pur1}e_2}$ & $\{ b_2,b_4,b_5 \}$ & 1 & 1 & 1 & 1 \\
${\color{pur1}e_3}$ & $\{ b_3,b_4,b_5 \}$ & 1 & 2 & 4  & 8 \\
${\color{mag1}v_1}$ & $\{ b_2,b_3,b_4,b_5 \}$ & 1 & 9 & 1 & 0 \\
${\color{mag1}v_2}$ & $\{ b_1,b_3,b_4,b_5 \}$ & 1 & 8 & 1 & 2 \\
${\color{mag1}v_3}$ & $\{ b_1,b_2,b_4,b_5 \}$ & 1 & 2 & 1 & 0 \\
\bottomrule
 \end{tabular}}\vspace{1mm}
\caption{Invariants of $P$.\label{table:faces}}
 \end{table}
Using the information in Table \ref{table:faces} along with Definition \ref{def:CM_Volume} we obtain the Chern-Mather volumes
\[ V_0 = 0 + 2 + 0 = 2, \]
\[ V_1 = 3 \cdot 1 + 1 \cdot 1 + 1 \cdot 8 = 12, \]
\[ V_2 = 12. \]
Substituting these values into \eqref{eq:polarDegressCM}, and \eqref{eq:EDDegressCM} we have that the polar degrees and ED degree are:  
\[ \delta_0(X_A) = 3V_2-2V_1+V_0 = 14 ,\]
\[ \delta_1(X_A) = 3V_2-V_1 = 24, \]
\[ \delta_2(X_A) = V_2 = 12, \]
\[ \rm{EDdegree}(X_A) = \delta_0(X_A)+\delta_1(X_A)+\delta_2(X_A) =  50. \]
\end{example}
\subsection{Gale Dual Formulas for Subdiagram Volumes in Codimension Two}
As above we consider a codimension two toric variety $X_A$ in $\p^{n-1}$ and let $P={\rm Conv}(A)$. In this subsection we present several formulas for subdiagram volumes covering all possible expressions which could appear in the computation of the polar degrees and ED degree of $X_A$. Let $\alpha$ and $\beta$ be faces of $P$. These subdiagram volumes can be broadly grouped into two types, those of the form $\mu(P,\beta)$ and those of the form $\mu(\alpha,\beta)$ where $\beta\subset \alpha$.
\subsubsection{Subdiagram volumes $\mu(P,\beta)$}
\label{subsection:mu_A_beta}

Let $\beta$ be a face such that $B_\beta$ only has vectors from the same relevant line. Let $v$ be a primitive vector in the relevant line and  define $\lambda_i$ by $b_i=\lambda_i v$, for $b_i \in B_{\beta}$. With these notations we define
\[ v_+^\beta = \{ i \;| \;b_i \in B_\beta, \lambda_i > 0 \} \]
\[ v_-^\beta = \{ i \;| \;b_i \in B_\beta, \lambda_i < 0 \} .\]

\begin{theorem} \label{theorem:muPrelevant}
Let $X_A\subset \p^{n-1}$ be a projective toric variety of codimension $2$ and $P={\rm Conv} (A)$. Let $\beta$ be a face of $P$ having codimension $r$ with only lattice points from a relevant line with primitive vector $v$. Let the set $\mathfrak{I}_{\beta}$ index the rows of $B_{\beta}$, then
\[\mu(P,\beta) = \frac{\min\left( \sum_{i\in v_+^\beta} |\lambda_i|, \sum_{i\in v_-^\beta} |\lambda_i|\right)}{\gcd\left(\lambda_i\; | \; b_i=\lambda_iv\right)_{i\in \mathfrak{I}_{\beta}}},\; {\rm and}\;\;i(P,\beta) = \gcd \left(\lambda_i\; | \; b_i=\lambda_iv \right)_{i\in \mathfrak{I}_{\beta}}. \]
\end{theorem}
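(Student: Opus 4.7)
The plan is to compute $\mu(P,\beta)$ and $i(P,\beta)$ by analyzing the images of the lattice points $\{a_i\}_{i\in B_\beta}$ in the rank-$r$ quotient lattice $L := \Z P / (\Z P \cap \R\beta)$. Proposition \ref{proposition:faceDimension} gives $|B_\beta| = r+1$. The rank-two space of Gale duality relations $\sum_i(\alpha\cdot b_i)a_i = 0$ (for $\alpha\in\R^2$), combined with $b_i = \lambda_i v$ on $B_\beta$, collapses on restriction to $B_\beta$ to a single rank-one relation; tracing through integrality, the kernel of the surjection $\Z^{B_\beta}\twoheadrightarrow L$, $e_i \mapsto \bar a_i$, is exactly $\Z\cdot(\tilde\lambda_i)$, where $\tilde\lambda_i := \lambda_i/g$ and $g := \gcd(\lambda_i)_{i\in\mathfrak{I}_\beta}$. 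Thus the $\bar a_i$ generate $L$ and satisfy the primitive relation $\sum\tilde\lambda_i\bar a_i = 0$.

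For the two volumes, let $C$ be the $r\times(r+1)$ integer matrix with columns $\bar a_i$ and let $\tilde C$ be obtained by prepending a row of $1$'s. Since the columns of $C$ span $L \cong \Z^r$, Proposition \ref{proposition:indexgcd} gives $\gcd_i \det C_{\hat i} = 1$, and the cofactor identity then pins down $((-1)^{i+1}\det C_{\hat i})_i = \pm(\tilde\lambda_i)$. Laplace expansion of $\det\tilde C$ along the top row together with the standard identity $|\det\tilde C| = \Vol(\Conv(\bar a_i))$ (seen by subtracting the first column of $\tilde C$ from the others) yields $\Vol(\Conv(\bar a_i)) = |\sum\tilde\lambda_i| = |S_+-S_-|/g$, where $S_\pm := \sum_{i\in v_\pm^\beta}|\lambda_i|$. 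For $\Vol(\Conv(0,\bar a_i))$ (assuming WLOG $S_+\ge S_-$), I would use the classical circuit triangulation $\Conv(0,\bar a_i) = \bigcup_{i\in v_+^\beta}\Conv(0,\{\bar a_j\}_{j\ne i})$ into simplices of normalized volume $|\det C_{\hat i}| = |\tilde\lambda_i|$, giving total $\sum_{i\in v_+^\beta}|\tilde\lambda_i| = S_+/g = \max(S_+,S_-)/g$. Subtracting then yields $\mu(P,\beta) = (\max(S_+,S_-) - |S_+-S_-|)/g = \min(S_+,S_-)/g$.

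For $i(P,\beta) = [\Z P\cap\R\beta:\Z\beta]$, compare the two surjections from $\Z^{B_\beta}$: one onto $L$ (kernel $K_2 = \Z\tilde\lambda$) and one onto $\Z P/\Z\beta$ (kernel $K_1$). The snake lemma gives $K_2/K_1 \cong (\Z P\cap\R\beta)/\Z\beta$, so $[K_2:K_1] = i(P,\beta)$. Picking $\alpha\in\Z^2$ with $\alpha\cdot v = 1$ (possible since $v$ is primitive) and setting $\eta_i := \alpha\cdot b_i$, the Gale relation gives $\sum_{i\in B_\beta}\lambda_i a_i = -\sum_{i\in\beta}\eta_i a_i \in\Z\beta$, so $\lambda = g\tilde\lambda\in K_1$ and $[K_2:K_1]\mid g$. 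Equality follows because the only $\Z$-relations among $\{a_j\}_{j\in\beta}$ lie in the rank-one $v^\perp$-direction of $\Z B$ (again by Gale duality), ruling out $t'\tilde\lambda\in K_1$ for $1\le t'<g$. The main technical hurdle is likely this reverse-divisibility step — carefully tracking how the two independent Gale-dual directions interact — together with justifying the classical triangulation that underlies $\Vol(\Conv(0,\bar a_i))$.
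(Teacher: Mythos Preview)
Your approach is correct and takes a genuinely different route from the paper's. The paper works with the explicit matrix $A$ of Proposition~\ref{proposition:dualMatrix} and the block decomposition of Proposition~\ref{proposition:HSAlgorithm}: it writes out the submatrix $C$ in terms of the brackets $[1,i]$, evaluates $\Vol(\Conv(C))$ as a single determinant, and gets $\Vol(\Conv(C\cup\{0\}))$ by adjoining a row of ones to pass to a codimension-one configuration and then invoking Theorem~\ref{theorem:degreeBideal} and Proposition~\ref{proposition:degreeCodim1}; the index $i(P,\beta)$ is extracted by comparing gcds of maximal minors of the two diagonal blocks and cancelling the global factor $[1,2]^{n-4}$ via Proposition~\ref{proposition:latticeIndexA} and Remark~\ref{remark:gcdPrimitive}. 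Your argument is coordinate-free: you recognise the images $\bar a_i$ in the quotient lattice as a circuit with primitive relation $\tilde\lambda$, read off $\Vol(\Conv(\bar a_i))$ from the cofactor identity, and obtain $\Vol(\Conv(0,\bar a_i))$ from the standard circuit triangulation, with the snake lemma handling the index. The paper's approach buys a uniform computational template that reuses the lattice-ideal degree machinery needed anyway for Theorem~\ref{theorem:muFromBNoRel}; yours buys conceptual transparency, making it visible that $\min(S_+,S_-)/g$ is exactly the smaller half of a circuit triangulation. Incidentally, your ``reverse-divisibility'' worry dissolves quickly: if $t'\tilde\lambda\in K_1$ then the full relation vector on $\Z^n$ lies in $\ker_{\Z}A=\mathrm{im}_{\Z}B$, hence equals $B\alpha$ for some $\alpha\in\Z^2$, and the $\mathfrak{I}_\beta$-coordinates give $t'\tilde\lambda_i=(\alpha\cdot v)\,g\,\tilde\lambda_i$, so $g\mid t'$ immediately.
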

\begin{proof} 
We will apply Proposition \ref{propn:SubDIagVolume}. After reordering we may assume that $B_{\beta}$ consists of the rows $b_2,...,b_r$ of $B$. Since all lattice points of $B_{\beta}$ are contained in the same relevant line we have $[2,i]=0$ for all $i=3,...,r$. This implies that after reordering the columns as in Proposition \ref{propn:SubDIagVolume}   $A$ has block form
\begin{equation}
A= \begin{bmatrix}  D& \ast \\ 0 & C  \end{bmatrix} .\label{eq:AinmuPbetaProp}
\end{equation} 
 The submatrix $C$ has $r-2$ rows and is given by 
\[ C= \begin{bmatrix}  -[1,3] & [1,2] & 0 & 0 & \cdots & 0 \\
 -[1,4] & 0 & [1,2] & 0 &  \cdots & 0 \\
 -[1,5] & 0 & 0 & [1,2] & \cdots &  0 \\
\vdots & \vdots & \vdots & \vdots & \ddots & 0  \\ 
 -[1,r] & 0 & 0 &  \cdots & \cdots &  [1,2]  \\ \end{bmatrix}. \]
 By considering maximal minors we see that the lattice spanned by $C$ has index $j=[1,2]^{r-3} \gcd([1,i])_{i\in \mathfrak{I}_{\beta}}$. We know that $b_i=\lambda_iv$ for all $i=2,\dots, r$, let $|b_1,v|$ be the value of the determinant of the matrix with rows $b_1$ and $v$, then $ [1,i]=\lambda_i|b_1,v|$ and we have 
\begin{equation}
C= |b_1,v|\cdot \begin{bmatrix}  -\lambda_3 &\lambda_2 & 0 & 0 & \cdots & 0 \\
 -\lambda_4 & 0 & \lambda_2 & 0 &  \cdots & 0 \\
 -\lambda_5 & 0 & 0 & \lambda_2 & \cdots &  0 \\
\vdots & \vdots & \vdots & \vdots & \ddots & 0  \\ 
 -\lambda_r & 0 & 0 &  \cdots & \cdots &  \lambda_2 \\
 \end{bmatrix}.
 \label{eq:CtildeProp2_5}
 \end{equation} 
Reformulating we see that the index $j=\lambda_2^{r-3} |v,b_1|^{r-2} \gcd(\lambda_i)_{i\in \mathfrak{I}_{\beta}}$. Let $P_1={\rm Conv}\left(C \right)$, $P_2={\rm Conv}\left(C\cup \left\lbrace 0\right\rbrace \right)$. By Proposition \ref{propn:SubDIagVolume} we have that \begin{equation}
\mu(A,\beta)={\rm Vol}({\rm Conv}\left(C\cup \left\lbrace 0\right\rbrace \right))-{\rm Vol}({\rm Conv}\left(C \right))={\rm Vol}(P_2)-{\rm Vol}(P_1),\label{eq:mu1}
\end{equation}
where ${\rm Vol}$ is the normalized $(r-2)$-dimensional volume. First we compute 
\[ \Vol(P_1) = \pm |v,b_1|^{r-2} \begin{vmatrix}  -\lambda_3 &\lambda_2 & 0 & 0 & \cdots & 0 \\
 -\lambda_4 & 0 & \lambda_2 & 0 &  \cdots & 0 \\
 -\lambda_5 & 0 & 0 & \lambda_2 & \cdots &  0 \\
\vdots & \vdots & \vdots & \vdots & \ddots & \lambda_2  \\ 
 -\lambda_r -\lambda_2 & -\lambda_2 & -\lambda_2 &  \cdots & \cdots &  -\lambda_2 \\ \end{vmatrix}.\]
After doing row and column operations we get that $\Vol(P_1) = |v,b_1|^{r-2} \lambda_2^{r-3} \sum_{i=2}^r -\lambda_i$. Hence after taking the absolute value and normalizing with respect to the index $j$ we get $$\Vol(P_1) = \frac{|\sum_{i=2}^r \lambda_i|}{\gcd(\lambda_i)_{i\in \mathfrak{I}_{\beta}}}. $$
Now consider the polytope $P_2={\rm Conv}\left(C\cup \left\lbrace 0\right\rbrace \right)$. Volume is preserved under taking cones, so we may instead consider the normalized $r-2$ dimensional volume of the convex hull of $$ \tilde{C}= |b_1,v|\cdot \begin{bmatrix}  -\lambda_3 &\lambda_2 & 0 & 0 & \cdots & 0 &0\\
 -\lambda_4 & 0 & \lambda_2 & 0 &  \cdots & 0&0 \\
 -\lambda_5 & 0 & 0 & \lambda_2 & \cdots &  0 &0\\
\vdots & \vdots & \vdots & \vdots & \ddots & 0 &0 \\ 
 -\lambda_r & 0 & 0 &  \cdots & \cdots &  \lambda_2&0 \\
  1 & 1 & 1 &  \cdots & \cdots & 1&1 \\
 \end{bmatrix}.$$ $\tilde{C}$ corresponds to a codimension one toric variety $X_{\tilde{C}}$, by Theorem \ref{theorem:degreeBideal} we have that $${\rm Vol}\left(P_2\right)=\frac{\deg(I_{\tilde{B}})}{|T(\Z^r/\Z {\tilde{B}})|}$$ where $
 \tilde{B}=\begin{bmatrix}
 \lambda_2& \lambda_3& \cdots & \lambda_r& -\sum_{i=2}^r\lambda_i
 \end{bmatrix}
 $ is the Gale dual of $\tilde{C}$. Applying Proposition \ref{proposition:degreeCodim1}, we have that 
$${\rm Vol}\left(P_2\right)=\frac{\max\left( \sum_{i\in v_+^\beta} |\lambda_i|, \sum_{i\in v_-^\beta} |\lambda_i|\right)}{|T(\Z^r/\Z {\tilde{B}})|}.$$
Note that $|T(\Z^r/\Z {\tilde{B}})|$ equals the greatest common divisor of elements of $\tilde{B}$, i.e.~$\gcd(\lambda_i)_{i\in \mathfrak{I}_{\beta}}$.
Substituting the computed values into \eqref{eq:mu1} we have:$$
\mu(P,\beta)=\frac{\max\left( \sum_{i\in v_+^\beta} |\lambda_i|, \sum_{i\in v_-^\beta} |\lambda_i|\right)}{\gcd(\lambda_i)_{i\in \mathfrak{I}_{\beta}}}-\frac{|\sum_{i=2}^r \lambda_i|}{\gcd(\lambda_i)_{i\in \mathfrak{I}_{\beta}}} = \frac{\min\left( \sum_{i\in v_+^\beta} |\lambda_i|, \sum_{i\in v_-^\beta} |\lambda_i|\right)}{\gcd(\lambda_i)_{i\in \mathfrak{I}_{\beta}}}.
$$
Now compute the index $i(P,\beta)$. Consider the submatrix $D$ from \eqref{eq:AinmuPbetaProp},
\[ D= \begin{bmatrix}  [2,r+1] & [1,2] & 0 & 0 & \cdots & 0 & 0 \\
 [2,r+2] & 0 & [1,2] & 0 &  \cdots & 0 & 0 \\
 [2,r+3] & 0 & 0 & [1,2] & \cdots &   \hdots & 0 \\
 \vdots & \vdots & \cdots & \cdots & \ddots & \hdots & 0 \\
[2,n-1] & \vdots & \cdots & \ddots & \cdots & [1,2] & 0  \\ 
 1 & 1 & 1 &  \cdots & \cdots &  1 & 1 \\ \end{bmatrix}. \]
After taking the column-wise Hermite normal form of A, without switching the order of the columns, we will have $(n-2)$ non-zero columns. By Proposition \ref{proposition:LatticeInd1} we have that the index $[Z^{n-2}:\Z A]$ is equal to the determinant of these columns, that is $[Z^{n-2}:\Z A] = [1,2]^{n-4}$. Projecting onto the linear space spanned by $\beta$, we get that $[\Z^{r-2} : \Z A \cap \R\beta]=\frac{[1,2]^{n-4}}{\det D'}$ where $D'$ is the nonzero submatrix of the Hermite normal form of $A$ corresponding to $D$. By Lemma \ref{proposition:indexgcd} $\det(D')$ is equal to the greatest common divisor of the maximal minors of $D$. Consider the inclusion of lattices $\Z^{r-2} \supset \Z A \cap \R\beta \supset \Z \beta,$ we have that
\[ [\Z^{r-2}:\Z \beta] = [\Z^{r-2}: \Z A \cap \R\beta] i(A,\beta). \]
Let $c$ be the greatest common divisor of the maximal minors of $C$ and $d$ be greatest common divisor of the maximal minors of $D$. Then $[\Z^{r-2}: \Z \beta] = c$, thus
 \[ i(P,\beta) = \frac{cd}{[1,2]^{n-4}}. \]
 One computes that $c=[1,2]^{r-3} \gcd([1,i])_{i\in \mathfrak{I}_{\beta}}$ and that $d=[1,2]^{n-r-2} \gcd([2,j])_{j=1}^n$. Thus \begin{align*}
 i(P,\beta) &= \frac{cd}{[1,2]^{n-4}} = \frac{[1,2]^{r-3}\cdot \gcd([1,i])_{i\in \mathfrak{I}_{\beta}}\cdot [1,2]^{n-r-2} \cdot \gcd([2,j])_{j=1}^n}{[1,2]^{n-4}} \\
  &= \frac{ \gcd([1,i])_{i\in \mathfrak{I}_{\beta}} \cdot \gcd([2,j])_{j=1}^n}{[1,2]} = \frac{ \gcd(\lambda_i \det(b_1,v))_{i\in \mathfrak{I}_{\beta}} \cdot  \lambda_2 \cdot \gcd(\det(v,b_j))_{j=1}^n}{ \lambda_2 \det(b_1,v)}  \\
  &=  \gcd(\lambda_i)_{i\in \mathfrak{I}_{\beta}} \cdot  \gcd(\det(v,b_j))_{j=1}^n .
 \end{align*}By Remark \ref{remark:gcdPrimitive} $\gcd(\det(v,b_j))_{j=1}^n = 1$.
\end{proof}

\begin{example}
 In Example \ref{runningexample}, the edge $e_1$ in Figure \ref{figure:S2Ex} is not a simplex, hence by Theorem \ref{theorem:muPrelevant} we have that $i(P,e_1) = \gcd(1,4)=1$, and $\mu(P,e_1) = \min \{ 1,4 \}=1$.
\end{example}

\begin{theorem}
Let $X_A\subset \p^{n-1}$ be a projective toric variety with ${\rm codim}(X_A)=2$ and set $P={\rm Conv}(A)$. Take a face  $\beta$ of $P$ such that not all $b_i \in B_\beta$ are contained in the same relevant line. Let $w_\beta= \sum_{i \in \I_\beta} -b_i$ and let $B'_\beta$ be the matrix obtained by adding $w_\beta$ as an extra row of $B_\beta$. Then 
\[ \mu(P,\beta) = \frac{\deg(I_{B'_\beta})}{|T(\Z^{r+1}/\Z {B'_\beta})|} - \frac{\sum_{j \in \I_\beta | \det(w_\beta,b_j) > 0} \det(w_\beta,b_j)}{|T(\Z^{r+1}/\Z {B'_\beta})|},  \]
\[ i(P,\beta) = |T(\Z^{r+1}/\Z {B'_\beta})| = \gcd([i,j])_{i,j \in \mathfrak{I}_\beta}. \]
\label{theorem:muFromBNoRel}
\end{theorem}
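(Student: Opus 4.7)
By Proposition \ref{proposition:faceDimension}, the hypothesis that not all rows of $B_\beta$ lie on a single relevant line forces $\beta$ to be a simplex with $\dim\beta = n-r-1$, so $\beta$ has codimension $r-2$ in $P$. The plan is to apply Proposition \ref{propn:SubDIagVolume} with $\alpha = P$: order the columns of $A$ so that the $n-r$ columns indexed by $A_\beta$ come first and apply row operations to produce the block matrix $C$ of size $(r-2) \times r$. Then $\mu(P,\beta) = \Vol(\Conv(C \cup \{0\})) - \Vol(\Conv(C))$, and I would handle the two terms separately using Theorem \ref{theorem:degreeBideal}.

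For $\Vol(\Conv(C \cup \{0\}))$, form the augmented matrix $\tilde{C}$ of size $(r-1) \times (r+1)$ by prepending a zero column (for the origin) and appending a row of ones. I claim $B'_\beta$ is a Gale dual of $\tilde{C}$, with the two parts of the verification being: (i) the rows of $C$, which are integer combinations of rows of $A$ vanishing on $A_\beta$, annihilate the rows of $B_\beta$ by virtue of $AB=0$; and (ii) the ones-row dotted with $B'_\beta$ vanishes precisely because $w_\beta = -\sum_{i \in \mathfrak{I}_\beta} b_i$. Theorem \ref{theorem:degreeBideal} then produces $\Vol(\Conv(C \cup \{0\})) = \deg(I_{B'_\beta})/|T(\Z^{r+1}/\Z B'_\beta)|$.

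For $\Vol(\Conv(C))$, work instead with $\tilde{C}'$, the $(r-1) \times r$ matrix obtained by deleting the zero column from $\tilde{C}$. Its Gale dual is a single vector $\mathbf{c} \in \Z^r$. Since $\ker \tilde{C}'$ equals $\ker \tilde{C}$ intersected with the hyperplane where the entry corresponding to the dropped column vanishes, and $\ker \tilde{C}$ is spanned by the two columns of $B'_\beta$ whose first entries are the components of $w_\beta$, a direct linear-algebra calculation identifies $\mathbf{c}_j$ (up to a common scalar) with $\det(w_\beta, b_j)$ for $j \in \mathfrak{I}_\beta$. Proposition \ref{proposition:degreeCodim1} then yields $\deg(I_\mathbf{c}) = \sum_{j \in \mathfrak{I}_\beta,\, \det(w_\beta,b_j) > 0} \det(w_\beta, b_j)$, the numerator in the claimed formula.

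The main obstacle is reconciling the normalizations: Theorem \ref{theorem:degreeBideal} applied to each Gale dual computes a volume with respect to the column lattice of its primal matrix ($\Z\tilde{C}$ versus $\Z\tilde{C}'$), whereas Proposition \ref{propn:SubDIagVolume} requires both volumes to be normalized with respect to the quotient lattice $\Z A / (\Z A \cap \R\beta)$. I expect to show that these two corrections combine so as to produce the common denominator $|T(\Z^{r+1}/\Z B'_\beta)|$ by tracking lattice indices through the Smith normal form of $B'_\beta$; the fact that $|T(\Z^{r+1}/\Z B'_\beta)|$ equals the gcd of the $2 \times 2$ minors of $B'_\beta$, together with the relation $\det(w_\beta, b_j) = -\sum_{i \in \mathfrak{I}_\beta} [i,j]$, yields the identity $|T(\Z^{r+1}/\Z B'_\beta)| = \gcd([i,j])_{i,j \in \mathfrak{I}_\beta}$ stated in the theorem. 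Finally, to obtain $i(P,\beta) = |T(\Z^{r+1}/\Z B'_\beta)|$ I would adapt the index-chasing argument at the end of the proof of Theorem \ref{theorem:muPrelevant}, applying Proposition \ref{proposition:latticeIndexA} and comparing the gcds of the maximal minors of the submatrices that arise in the HNF of $A$.
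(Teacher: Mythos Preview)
Your outline follows essentially the same strategy as the paper: reduce via Proposition~\ref{propn:SubDIagVolume} to the two volumes $\Vol(\Conv(C\cup\{0\}))$ and $\Vol(\Conv(C))$, cone up by appending a row of ones, and then invoke Theorem~\ref{theorem:degreeBideal} with the kernel lattices $B'_\beta$ and the vector $(\det(w_\beta,b_j))_{j\in\mathfrak{I}_\beta}$ respectively. One terminological caution: in neither case do you actually verify that these lattices are full Gale duals (they need not be---the discrepancy is precisely the torsion factor), but Theorem~\ref{theorem:degreeBideal} only requires the containment $\Z B'_\beta\subset\ker\tilde{C}$, which is what your argument establishes, so the application is sound.

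The one place your proposal is noticeably less concrete than the paper is the ``main obstacle'' you flag: reconciling the lattice normalizations so that both volumes end up over the common denominator $|T(\Z^{r+1}/\Z B'_\beta)|$. The paper handles this not abstractly via Smith normal form but by working throughout with the explicit presentation of $A$ from Proposition~\ref{proposition:dualMatrix}, so that $C$, $A'$, and $A''$ have entries that are bracket symbols $[i,j]$; the relevant lattice indices are then read off as gcds of maximal minors (Proposition~\ref{proposition:indexgcd}), each of which factors as $[1,2]^{r-3}$ times a bracket expression, and the Pl\"ucker relation $[1,k_1][2,k_2]-[2,k_1][1,k_2]=[1,2][k_1,k_2]$ makes the cancellations explicit. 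The same explicit-minor computation drives the identification $i(P,\beta)=\gcd([i,j])_{i,j\in\mathfrak{I}_\beta}$. Your abstract route may well go through, but the paper's coordinate computation is what actually closes the argument.
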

\begin{proof}
Again we use Proposition \ref{proposition:HSAlgorithm}. After rearranging the columns, the matrix has the following form:
\[ A= \begin{bmatrix} 0 & 0 & 0 & 0 & [2,3] & -[1,3] & [1,2] & 0 & 0 & 0  \\
\vdots & \vdots & \vdots & \vdots & [2,4] & -[1,4] & 0 & [1,2] &  \ddots & 0 \\
0 & 0 & 0 & 0& [2,5] & -[1,5] & 0 & 0 & \ddots & [1,2]  \\
[1,2] & 0 & 0 & 0 & \vdots & \vdots & 0 & 0 & \ddots & 0  \\ 
0 & [1,2] & 0 & \vdots & \ddots & \ddots & \ddots & \ddots & \ddots &  \vdots \\
\vdots & \vdots & \ddots & 0 & [2,n-1] & -[1,n-1] & 0 & 0 &  \cdots & 0  \\
1 & 1 & 1 & 1 & 1 & 1 & 1 & 1 & 1 & 1  \\\end{bmatrix}. \]
Note that, by Proposition \ref{proposition:faceDimension}, $\beta$ has codimension $r-2$. Now consider the matrix $A$, the only row operations we need to perform to pick out the correct submatrix $C$ in Proposition \ref{proposition:HSAlgorithm} is exchanging the top and bottom rows. Let $\{ c_1,...,c_r \}$ denote the columns of the resulting $(r-2) \times r$ submatrix $C$.
To compute the subdiagram volume $\mu(P,\beta)$ we first compute $\Vol(\Conv(c_1,...,c_r,0))$. Consider the $(r-1) \times (r+1)$ matrix   matrix $A'$ with rows $a_1',...,a_{r+1}'$ of the form
\[ A'= \begin{bmatrix} c_1 & c_2 & \cdots & c_r & 0 \\
1 & 1 & \cdots  & 1 & 1 
\end{bmatrix}. \]
Observe that by construction $A'$ has rank $r-1$ and that 
$B'_\beta \subset \ker A'$. We want to compute $\Vol(\Conv(c_1,...,c_r,0))$ normalized with respect to the lattice spanned by $C$. We have that $$\Vol(\Conv(c_1,...,c_r,0)) = \Vol(\Conv(a_1',...,a_{r+1}')),$$ since the second convex hull is equivalent to taking the cone over $\Conv(c_1,...,c_r,0)$ and normalized volume is preserved under taking cones.

$B'_\beta$ generates a lattice ideal $I_{B'_\beta}$ of codimension two. Since $B'_\beta \subset \ker(A')$ applying Theorem \ref{theorem:degreeBideal} gives
\[ \Vol(\Conv(c_1,...,c_r,0))=\Vol(\Conv(a_1',...,a_{r+1}')) = \frac{\deg I_{B'_\beta}}{[\Z^{r+1} : \Z {B'_\beta}]} .\]

Now consider the volume of the convex hull of $\{ c_1,...,c_r \}$, normalized with respect to the lattice spanned by $C$. Let $A''$ be the $(r-1) \times r$ matrix obtained by adding the row $(1,...,1)$ to $C$. There are two cases. If $(1,...,1)$ is already contained in the row span of $C$ then all lattice points $c_i$ are contained in an affine hyperplane. Dimension considerations dictate that the resulting normalized volume is zero. Note that in this case we automatically get by Gale duality that $w_\beta=0$, thus verifying this result. If $(1,...,1)$ is not in the row span of $C$ then $A''$ has rank $r-1$. Consider the $ 1 \times r$ matrix $B''$ with rows $\det(w_\beta,b_i)$. Then $B''$ is contained in $\ker (A'')$. We have that $\Vol(\Conv(c_1,...,c_r))=k\Vol(\Conv(0,a_1'',...,a_{r}''))$ where\begin{equation}
 k = \frac {[\Z^{r-1}: \Z A'']}{[\Z^{r-2}: \Z C]}.\label{eq:k_ind3_3}
\end{equation}
The equality above follows form the fact that, for a polytope $\mathcal{P}$ of dimension $\mathfrak{n}$, the $\mathfrak{n}$-dimensional Euclidean volume of $\mathcal{P}$ is equal to the $\mathfrak{n}+1$-dimensional Euclidean volume of a pyramid of height one over $\mathcal{P}$. The integer $k$ in \eqref{eq:k_ind3_3} arises since the lattice indices of the polytopes generated by the points $c_i$ and the points $a_i''$ may differ.

We claim that $[\Z^{r-1}: \Z A'']= [1,2]^{r-3} \gcd(\det(w_\beta,b_i))_{i \in \I_\beta}$  and $[\Z^{r-2}: \Z C]=[1,2]^{r-3} \gcd([i,j])_{i,j \in \I_\beta}$ hence 
\[ k = \frac{\gcd(\det(w_\beta,b_i))_{i \in \I_\beta}}{\gcd({[i,j])_{i,j \in \I_\beta}}}. \]
Indeed $[\Z^{r-2}: \Z C]$ has to equal $[\Z^{r-1} : \Z A']$ which we know, by the proof of Proposition \ref{proposition:latticeIndexA}, is equal to $[1,2]^{r-3} \gcd([i,j])_{i,j \in \I_\beta}$. The claim for the last of the indices can be proved  using elementary row operations; $A''$ is a $r-1 \times r$ matrix. Denote by $m_i$ the maximal minor obtained by deleting column $i$. We see that
\[ m_1 = [1,2]^{r-3} ([1,2]+[1,3]+ \cdots + [1,r])=[1,2]^{r-3} \det (1,\sum_{i=1}^r b_i) = [1,2]^{r-3} \det(w_\beta,b_i). \]
Similarly $m_2 = [1,2]^{r-3} \det(b_2,w_\beta)$. For $k>2$ we see that after column and row operations 
\[ m_k = \pm [1,2]^{r-4} \left( [2,k] \sum_{i=1 \; i \neq k}^r [1,i] + [1,k] \sum_{i=1 \; i \neq k}^r [i,2] \right),\]
which after substituting for $w_\beta$, rearranging and using the Plucker relation, equals $ \pm [1,2]^{r-3}\det(b_k,w_\beta)$. This proves the claim that $[\Z^{r-1}: \Z A'']= [1,2]^{r-3} \gcd(\det(w_\beta,b_i))_{i \in \I_\beta}$. Using this and Theorem \ref{theorem:degreeBideal} we get that\begin{align*}
 \Vol(\Conv(c_1,...,c_r))&=\Vol(\Conv(0,a_1'',...,a_{r}'')) \frac{\gcd(\det(w_\beta,b_i))_{i \in \I_\beta}}{\gcd([i,j])_{i,j \in \I_\beta}} \\
&= \frac{\deg I_{B''}}{|T(\Z^{r+1}/\Z {B''})|} \frac{\gcd(\det(w_\beta,b_i))_{i \in \I_\beta}}{\gcd([i,j])_{i,j \in \I_\beta}} .
\end{align*}
Now $|T(\Z^{r+1}/\Z {B''})| = \gcd(\det(w_\beta,b_i))_{i \in \I_\beta}$ and by Proposition \ref{proposition:degreeCodim1} we have that $$\deg I_{B''}=\sum_{j \in \I_\beta, \det(w_\beta,b_j) > 0} \det(w_\beta,b_j),$$ hence
\[ \Vol(\Conv(c_1,...,c_r))=\frac{\sum_{j \in \I_\beta, \det(w_\beta,b_j) > 0} \det(w_\beta,b_j)_{i \in \I_\beta}}{\gcd([i,j])_{i,j \in I_\beta}} .\]
Finally it remains to prove that $i(A,\beta)=|T(\Z^{r}/\Z {B'_\beta})|$ (which equals $\gcd([i,j])_{i,j \in \I_\beta}$). By considering the block form of $A$ from above we see that the lattice points of $\beta$ generate a lattice of index $[1,2]^{n-r-1}$. We also know that $[\Z^n : \Z A]=[1,2]^{n-4}$. We get that 
\[ [1,2]^{n-4}i(A,\beta)= [1,2]^{n-r-1} [\Z^{r-2} : \Z C]. \]
Since $[\Z^{r-2} : \Z C]=[1,2]^{r-3}|T(\Z^{r}/\Z {B'_\beta})|$ we obtain the desired result.

\end{proof}

\begin{example}\label{runningexample5}
In Example \ref{runningexample}, the edge $e_3$ in Figure \ref{figure:S2Ex} is a simplex, by Theorem \ref{theorem:muFromBNoRel} we have 
\[ i(P,e_3) = \gcd (8,-8,12)=4 , \;\; {\rm and}\;\;\; \mu(P,e_3) = \frac{12}{4} - \frac{4}{4} = 2 \]
where the last number is obtained as follows. The matrix $B'_{e_3}$ from Theorem \ref{theorem:muFromBNoRel} is
\[B'_{e_3}=\begin{bmatrix}  2 & 2 \\ -4 & 0 \\ 1 & -3  \\ 1 & 1 \end{bmatrix}\]
and the vector $w_{e_3}$ equals $(1,1)$. The degree of $I_{B'_{e_3}}$ is $12$ and is computed using Proposition \ref{proposition:degreeBmatrix}. Also note that in this case we have $( \det(w_{e_3},b_3),\det(w_{e_3},b_4),\det(w_{e_3},b_5))=(0,4,-4)$. We may now compute the Euler obstructions $\Eu(e_i)=i(P,e_i) \mu(P,e_i)$ for any edge $e_i$, the results are summarized in Table \ref{table:faces}. 

To find the Euler obstruction of the vertex $v_1$ we will again apply Theorem \ref{theorem:muFromBNoRel} to compute $\mu(P,v_1)$. For any vertex $i(P,\alpha)$ is automatically $1$. The matrix $B'_{v_1}$ is in this case just $B$ itself, which has degree $12$ (see Example \ref{runningexampleBdeg}). We have that $w_{v_1}$ equals $(1,0)$ hence $(\det(w_{v_1},b_2),\det(w_{v_1},b_3),\det(w_{v_1},b_4)\det(w_{v_1},b_5))= (1,2,0,-3),$ giving
\[ \mu(P,v_1) = 12 - 3 = 9.\]To complete the computation of the Euler obstructions of the vertices (using Proposition \ref{propn:EUPropDef}) we also need to compute $i(e_i,v_j)$ and $\mu(e_i,v_j)$. We develop the necessary tools to do this using the matrix $B$ in \S\ref{subsection:mu_alpha_beta} below. 
\end{example}

\subsubsection{Subdiagram volumes $\mu(\alpha,\beta)$}
\label{subsection:mu_alpha_beta}
We now consider the subdiagram volumes of two proper faces of the polytope associated to a codimension two projective toric variety. 
\begin{theorem}
Let $X_A\subset \p^{n-1}$ be a projective toric variety with ${\rm codim}(X_A)=2$, set $P={\rm Conv}(A)$, and let $\beta \subset \alpha$ be faces of $P$. We have that \[ \mu(\alpha,\beta) = 1, \;\; {\rm and}\;\;\; i(\alpha,\beta) = 1 \] if any of the following conditions hold:\begin{enumerate}
\item[(i)] not all rows of $B_\alpha$ are contained in a relevant line,
\item[(ii)] all rows of $B_\beta$ are contained in the same relevant line.
\end{enumerate} 
\label{theorem:twoFaces1}
\label{theorem:twoFaces2}
\end{theorem}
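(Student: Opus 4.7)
The plan is to bypass the Hermite normal form machinery of Proposition~\ref{propn:SubDIagVolume} and work directly from the definition \eqref{eq:subdiagramVol} of $\mu(\alpha, \beta)$. The key observation is that in both cases the $k := \dim \alpha - \dim \beta$ lattice points of $A_\alpha \setminus A_\beta$ project to a $\Z$-basis of the quotient lattice $\Z\alpha/(\Z\alpha \cap \R\beta)$, from which both $\mu(\alpha, \beta) = 1$ and $i(\alpha, \beta) = 1$ will follow immediately.

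First I would verify the counts via Proposition~\ref{proposition:faceDimension}. In case (i) the face $\alpha$ is a simplex and hence so is its face $\beta$, giving $|A_\alpha| = \dim \alpha + 1$ and $|A_\beta| = \dim \beta + 1$. In case (ii), $A_\beta \subseteq A_\alpha$ forces $B_\alpha \subseteq B_\beta$, so $B_\alpha$ and $B_\beta$ both lie on the same relevant line; Proposition~\ref{proposition:faceDimension} then makes both $\alpha$ and $\beta$ non-simplices with $|A_\alpha| = \dim \alpha + 2$ and $|A_\beta| = \dim \beta + 2$. Either way $|A_\alpha \setminus A_\beta| = k$.

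The quotient $L := \Z\alpha/(\Z\alpha \cap \R\beta)$ is torsion-free, since a relation $nv \in \R\beta$ with $n > 0$ forces $v \in \R\beta$ (as $\R\beta$ is an $\R$-subspace); hence $L$ is free of rank $k$. The images of the $k$ elements of $A_\alpha \setminus A_\beta$ generate $L$ (the images of elements of $A_\beta$ vanish), and since a rank-$k$ free abelian group generated by exactly $k$ elements necessarily has those elements as a $\Z$-basis, the images form a basis $w_1, \ldots, w_k$. Then $\Conv(A_\alpha/\beta) = \Conv(0, w_1, \ldots, w_k)$ is a standard lattice simplex of $L$-normalized volume $1$, while $\Conv(A_\alpha/\beta \setminus \{0\}) = \Conv(w_1, \ldots, w_k)$ is a $(k-1)$-dimensional face with normalized $k$-volume $0$, giving $\mu(\alpha, \beta) = 1$. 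For the index, given $v \in \Z\alpha \cap \R\beta$ and writing $v = \sum_{a_i \in A_\alpha} c_i a_i$ with $c_i \in \Z$, the vanishing of the image of $v$ in $L$ forces $c_i = 0$ whenever $a_i \in A_\alpha \setminus A_\beta$, so $v \in \Z\beta$; thus $\Z\alpha \cap \R\beta = \Z\beta$ and $i(\alpha, \beta) = 1$.

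The only step that distinguishes the two cases is the verification $|A_\alpha \setminus A_\beta| = k$; the subsequent lattice argument is uniform. The main obstacle I anticipate is case (ii), where $|A_\alpha|$ and $|A_\beta|$ each exceed the simplex bound by $1$, and one must carefully check that the two ``extra'' lattice points on the relevant line exactly cancel in the difference $A_\alpha \setminus A_\beta$.
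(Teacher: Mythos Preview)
Your argument is correct, and it differs from the paper's proof in a pleasant way. The paper treats the two cases separately: for~(i) it invokes the fact that both $\alpha$ and $\beta$ are simplices to read off $\mu(\alpha,\beta)=1$ directly (and does not explicitly argue $i(\alpha,\beta)=1$); for~(ii) it goes through the Hermite normal form machinery of Proposition~\ref{propn:SubDIagVolume}, observing that the submatrix $C$ is square of full rank so that $\Conv(C)$ has zero top-dimensional volume and $\Conv(C\cup\{0\})$ is a unimodular simplex, and then gives a separate basis argument for $i(\alpha,\beta)=1$. Your route bypasses the Hermite normal form entirely and unifies the two cases: once you verify $|A_\alpha\setminus A_\beta|=\dim\alpha-\dim\beta$ via Proposition~\ref{proposition:faceDimension}, the quotient-lattice argument is uniform, and your deduction of $i(\alpha,\beta)=1$ from the basis property of the $w_i$ is cleaner than the paper's. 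The only small point worth making explicit is that your ``$k$ generators of a free rank-$k$ group form a basis'' step uses that $L$ is torsion-free (so a surjection $\Z^k\twoheadrightarrow L$ must be injective), which you do justify. Your anticipated obstacle in case~(ii) is a non-issue: since $B_\alpha\subseteq B_\beta$ lies on the same relevant line, both $|A_\alpha|$ and $|A_\beta|$ acquire the same ``$+2$'' correction from Proposition~\ref{proposition:faceDimension}, and these cancel in the difference exactly as you say.
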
\begin{proof}First consider case (i), where not all rows of $B_\alpha$ are contained in a relevant line. In this case both $\beta$ and $\alpha$ are simplices; this means that the $\dim(\alpha)$-dimensional volume of $\alpha\backslash \beta$ is zero, and hence $\mu(\alpha,\beta) ={\rm Vol}({\rm Conv}(A_{\alpha}))=1$, since $\alpha$ is a simplex. This concludes the proof of (i).

Now consider consider case (ii), where all rows of $B_\beta$ are contained in the same relevant line. Set $r=|\I_\beta|$ and $s=|\I_\alpha|$. To calculate $\mu(\alpha,\beta)$ we are must pick out the correct submatrix $C$ in Proposition \ref{propn:SubDIagVolume} and compute the resulting normalized volumes. By our assumption on the face structure of $\alpha$ and $\beta$ the correct submatrix will be an $r-s \times r-s$ matrix of full rank. But the convex hull of $r-s$ points in $r-s$ dimensional space has volume 0, meaning the second term in the expression for $\mu(\alpha,\beta)$ in Proposition \ref{propn:SubDIagVolume} is zero. Thus $\mu(\alpha,\beta)$ is equal to the volume of the convex hull of the columns of $C$ after we add $0$. By the argument above this is a simplex. The volume of a simplex (inside the lattice spanned by the submatrix) equals one.

The lattice points of $\alpha$ span a linear subspace of dimension $n-s-2$. The lattice points of $\beta$ span a linear subspace of dimension $n-r-2$. The lattice points of $\alpha \setminus \beta$ are $s-r$ lattice point which span a linear subspace of dimension $s-r$. Hence each $a_i \in \alpha \setminus \beta$ is part of a basis of the lattice $L$ generated by $\alpha$. Thus any lattice point of  $L$ which also lies in $L'$ has to be in the lattice generated by $\beta$. It follows that the index $i(\alpha, \beta)=1$.
\end{proof}

\begin{theorem}
Let $X_A\subset \p^{n-1}$ be a projective toric variety with ${\rm codim}(X_A)=2$ and set $P={\rm Conv}(A)$. Consider faces $\beta \subset \alpha$ of $P$ where not all rows of $B_\beta$ are contained in the same relevant line, but all rows $B_\alpha$ are contained in the same relevant line. Let $v$ be primitive vector of the relevant line containing the rows of $B_\alpha$ and let $\gamma_i=\det(v,b_i)$, for $i \in \I_\beta$. Then \[\mu(\alpha,\beta) =\frac{\min \left( \sum_{i\;:\; \gamma_i>0}|\gamma_i|, \sum_{i\;:\; \gamma_i<0}|\gamma_i|\right)}{\gcd (\gamma_i)_{i \in \I_\beta}},\; \; {\rm and}\;\;\;i(\alpha,\beta) = \gcd (\gamma_i)_{i \in \I_\beta} . \]\label{theorem:twoFaces3}
\end{theorem}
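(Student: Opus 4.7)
The strategy is a direct adaptation of the proof of Theorem \ref{theorem:muPrelevant}. I apply Proposition \ref{propn:SubDIagVolume}, exploit the relevant-line hypothesis on $B_\alpha$ to make the middle submatrix $C$ of the row Hermite normal form explicit, then compute $\Vol(\Conv(C))$ and $\Vol(\Conv(C \cup \{0\}))$ separately and subtract.

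First I reorder the columns of $A$ so that those indexed by $\beta$ appear first, then those in $\alpha \setminus \beta$, then those in the complement of $\alpha$, and I write $A$ in the form of Proposition \ref{proposition:dualMatrix} with two distinguished rows $b_1, b_2$ of $B$ satisfying $[1,2] \neq 0$, chosen and positioned to be compatible with this reordering. The hypothesis that every $b_i$ with $i \in \I_\alpha$ is a multiple of the primitive vector $v$ forces $[i,j]=0$ for all $i,j \in \I_\alpha$, which produces a block of zeros in $A$ and, after Hermite reduction, isolates a middle block $C$ of size $(r-s-1) \times (r-s)$. Factoring out a common scalar built from $[1,2]$ and $\det(b_1, v)$, the entries of $C$ depend only on the values $\gamma_i = \det(v,b_i)$ for $i \in \I_\beta \setminus \I_\alpha$. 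Structurally $C$ is identical to the matrix displayed in \eqref{eq:CtildeProp2_5} with each $\lambda_i$ replaced by $\gamma_i$.

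Given this explicit form, $\Vol(\Conv(C))$ reduces by the same elementary row and column operations used for $\Vol(P_1)$ in the proof of Theorem \ref{theorem:muPrelevant}, yielding $|\sum_{i \in \I_\beta \setminus \I_\alpha} \gamma_i|/\gcd(\gamma_i)_{i \in \I_\beta}$ after normalization by the lattice index. For $\Vol(\Conv(C \cup \{0\}))$ I adjoin an all-ones row (using invariance of normalized volume under coning) and recognize the augmented matrix as the Gale dual of the $1 \times (r-s+1)$ row vector with entries the $\gamma_i$ together with $-\sum_i \gamma_i$. Theorem \ref{theorem:degreeBideal} combined with Proposition \ref{proposition:degreeCodim1} then identifies this volume as $\max(\sum_{\gamma_i>0}\gamma_i, \sum_{\gamma_i<0}|\gamma_i|)/\gcd(\gamma_i)_{i \in \I_\beta}$. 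Subtracting the two volumes and using the identity $\max(x,y) - |x-y| = \min(x,y)$ for nonnegative reals $x,y$ yields the claimed formula for $\mu(\alpha,\beta)$.

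The index $i(\alpha,\beta)$ is obtained from the chain of lattice inclusions $\Z\alpha \supset \Z A \cap \R\beta \supset \Z\beta$ by the argument of the final paragraph of the proof of Theorem \ref{theorem:muPrelevant}, computing the relevant greatest common divisors of maximal minors via Proposition \ref{proposition:indexgcd}. After all bracket-power factors collapse and Remark \ref{remark:gcdPrimitive} is invoked to eliminate a spurious $\gcd$ of the $\det(v,b_j)$ over all $j$, what remains is precisely $\gcd(\gamma_i)_{i \in \I_\beta}$. The main obstacle is the bookkeeping needed to establish the block structure of $A$ under the three-way column partition (those of $\beta$, those of $\alpha \setminus \beta$, and the complement of $\alpha$), and in particular verifying that the first block contributes only to the upper block of the Hermite form and not to $C$; once this arrangement is in place, each subsequent step is a direct transcription of computations already carried out in the proof of Theorem \ref{theorem:muPrelevant}.
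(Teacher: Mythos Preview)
Your approach is essentially the same as the paper's. The paper's own proof is a terse sketch that reorders $B$ so that $b_1\in B_\alpha$ lies on the relevant line and $b_2\in B_\beta\setminus B_\alpha$ does not, observes that the resulting submatrix $C$ from Proposition~\ref{propn:SubDIagVolume} has exactly the shape of \eqref{eq:CtildeProp2_5} with the $\lambda_i$ replaced by the $\gamma_i$, and then defers everything (both volumes and the index) to the computations already done in Theorem~\ref{theorem:muPrelevant}; it closes by noting that extending the sums from $\I_\beta\setminus\I_\alpha$ to all of $\I_\beta$ is harmless since $\gamma_i=0$ for $i\in\I_\alpha$. Your write-up follows this route step for step, and in fact supplies more detail on the two volume computations and on the index argument than the paper does. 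One minor caution: your phrase ``a common scalar built from $[1,2]$ and $\det(b_1,v)$'' only makes sense if your distinguished $b_1$ is the row \emph{off} the relevant line; in the paper's convention it is $b_1$ that lies on the line, and the scalar factored out of $C$ is $\lambda_1$ (from $b_1=\lambda_1 v$), giving $[1,i]=\lambda_1\gamma_i$. This is pure bookkeeping and does not affect the argument.
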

\begin{proof}
This proof of this result is very similar to that of Theorem \ref{theorem:muPrelevant}.  We may order the rows of the matrix $B$ (whose Gale dual defines $P$) so that $B_\alpha=\left\lbrace b_1,b_3,\dots, b_{r+1}\right\rbrace$ and $B_\beta=\left\lbrace b_1, b_2,\dots,b_s\right\rbrace$ where $b_2$ is not in the relevant line $v$. Then we have that $[1,3]=[1,4]=...=[1,r+1]=0$ since they are all contained in a relevant line. Hence we see that the correct submatrix $C$ in Proposition \ref{propn:SubDIagVolume} is given by
\[ C= \begin{bmatrix}  -[1,r+2] & [1,2] & 0 & 0 & \cdots & 0 \\
 -[1,r+3] & 0 & [1,2] & 0 &  \cdots & 0 \\
 -[1,r+4] & 0 & 0 & [1,2] & \cdots &  0 \\
\vdots & \vdots & \vdots & \vdots & \ddots & 0  \\ 
 -[1,s] & 0 & 0 &  \cdots & \cdots &  [1,2]  \\ \end{bmatrix} .\]
This matrix has the same form as the matrix in \eqref{eq:CtildeProp2_5}, hence the remainder of the proof proceeds similarly to that of Theorem \ref{theorem:muPrelevant}. In the resulting formula we obtain sums over $i$ such that $b_i \in B_\beta \setminus B_\alpha$, however since $\gamma_i=0$ when $i \in B_\alpha$ the same formula is true when looping over all rows of $B_\beta$.

\end{proof}
\begin{example}
Resuming Example \ref{runningexample} and Example \ref{runningexample5} we may now complete the computation of the Euler obstruction ${\rm Eu}(v)$ for a vertex $v$ in Figure \ref{figure:S2Ex}. To compute the Euler obstructions of the vertices we need to compute the numbers $i(e_i,v_j)$ and $\mu(e_i,v_j)$ using Theorems \ref{theorem:twoFaces1}, \ref{theorem:twoFaces2}, and \ref{theorem:twoFaces3}. Since $v_j$ is a vertex we have that $i(e_i,v_j)=1$ whenever it is defined. Table \ref{table:muve} gives the numbers $\mu(e_i,v_j)$. Putting this together we have that \[\Eu(v_1) = - \mu(P,v) + \mu(e_2,v_1) \Eu(e_2) + \mu(e_3,v_1) \Eu(e_3) = -9 +1 \cdot 1 + 1 \cdot 8 = 0.\] The Euler obstructions of the other vertices are computes similarly and are summarized in Table \ref{table:faces}.
\begin{table}[ht]
\begin{tabular}{c | c c c}
$\mu(e_i,v_j)$ & $v_1$ & $v_2$ & $v_3$ \\
\hline
$e_1$ & $\ast $ & 2 & 1 \\
$e_2$ & 1 & $\ast$ & 1 \\
$e_3$ & 1 & 1 & $\ast$ \\
\end{tabular}
\caption{Subdiagram volumes $\mu(e_i,v_j)$. We write $\ast$ when there is no containment relation between $e_i$ and $v_j$, i.e.~when $\mu(e_i,v_j)$ is undefined. \label{table:muve}}
\end{table}
\end{example}

\subsection{Volume Calculation Via the Gale Dual in Codimension two}
In this subsection we consider the problem of computing the volume of faces of the polytope associated to a codimension two projective toric variety $X_A$ using the Gale dual $B$ of $A$.

\begin{proposition}
Let $X_A\subset \p^{n-1}$ be a projective toric variety with ${\rm codim}(X_A)=2$ and set $P={\rm Conv}(A)$. Let $\beta$ be a face such that all rows of $B_\beta$ are contained in a relevant line. Let $v$ be a primitive vector in the relevant line. Then
\[ \Vol(\beta) = \sum_{j \in \I_\beta^c | \det(v,b_j) > 0} \det(v,b_j).  \]
\end{proposition}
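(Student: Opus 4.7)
The plan is to identify a Gale dual of the submatrix $A_\beta$ (whose columns are the lattice points of $\beta$) and then apply Theorem~\ref{theorem:degreeBideal}. Let $m = |\I_\beta^c|$. By Proposition~\ref{proposition:faceDimension} we have $\dim\beta = m - 2$, and since $(1,\ldots,1)$ lies in the row span of $A$ the columns of $A_\beta$ span a linear subspace of dimension exactly $m - 1$. Consequently $\ker(A_\beta)$ is one-dimensional, so it suffices to exhibit a generator in terms of $B$.

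Writing $b_i = \lambda_i v$ for $i \in \I_\beta$ and splitting the two identities $\sum_j B_{jk} a_j = 0$ (arising from $AB = 0$, for $k = 1, 2$) into their $\I_\beta$ and $\I_\beta^c$ parts, I would take the $v_2$-multiple of the first minus the $v_1$-multiple of the second; the common contribution from $\I_\beta$, which is proportional to $v_1 v_2 \sum_{i \in \I_\beta} \lambda_i a_i$, cancels, leaving
\[
\sum_{j \in \I_\beta^c} \det(v, b_j)\, a_j \;=\; 0
\]
(up to an overall sign). Thus $B' := (\det(v, b_j))_{j \in \I_\beta^c}$ lies in $\ker(A_\beta)$. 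Since $\beta$ is a proper face, not every $b_j$ with $j \in \I_\beta^c$ can be parallel to $v$, so $B'$ is nonzero and therefore generates $\ker(A_\beta)$ up to scaling; hence $B'$ is a Gale dual of $A_\beta$, defining a codimension-one lattice ideal $I_{B'}$.

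Applying Theorem~\ref{theorem:degreeBideal} to the pair $(A_\beta, B')$, together with the fact that $\Vol(\beta) = \Vol(\Conv(\{0\} \cup A_\beta))$ normalized with respect to $\Z A_\beta$ (valid because $A_\beta$ sits on an affine hyperplane at lattice distance one from the origin, so taking the cone with apex $0$ preserves normalized volume), I would obtain
\[
\Vol(\beta) \;=\; \frac{\deg(I_{B'})}{|T(\Z^{m}/\Z B')|}.
\]
By Proposition~\ref{proposition:degreeCodim1} the numerator equals $\sum_{j \in \I_\beta^c,\, \det(v,b_j) > 0} \det(v, b_j)$, while the denominator is the gcd of the entries of $B'$. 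Since $\det(v, b_i) = 0$ whenever $b_i$ is parallel to $v$, and in particular for every $i \in \I_\beta$, this gcd equals $\gcd(\det(v, b_j))_{j = 1}^n$, which is $1$ by Remark~\ref{remark:gcdPrimitive}; the formula follows. The main obstacle is the identification of $B'$ as a Gale dual of $A_\beta$ (in particular, verifying via the dimension count of Proposition~\ref{proposition:faceDimension} that $\ker(A_\beta)$ is truly one-dimensional and that $B'$ is nonzero); once that is in place the degree and torsion computations are direct applications of the results collected in \S\ref{subsection:miscResults}.
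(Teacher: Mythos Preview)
Your proof is correct and follows the same approach as the paper: recognize that $A_\beta$ defines a codimension-one toric variety, identify its Gale dual as $(\det(v,b_j))_{j\in\I_\beta^c}$, and invoke Proposition~\ref{proposition:degreeCodim1}. The paper's own proof is a two-line sketch that simply asserts $\deg X_\beta = \Vol(\beta)$ and applies Proposition~\ref{proposition:degreeCodim1} without explicitly writing down the Gale dual or checking the torsion factor; your version fills in precisely these details (the explicit linear-algebra verification that $B'\in\ker(A_\beta)$, nonvanishing of $B'$, and the appeal to Remark~\ref{remark:gcdPrimitive} to see the gcd is $1$), so it is in fact more complete than the paper's argument rather than different from it.
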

\begin{proof}

The lattice points of $\beta$ defines a toric variety $X_\beta$ of codimension $1$. We have that $\deg X_\beta= \Vol(\beta)$. By Proposition \ref{proposition:degreeCodim1} we have that
\[ \deg X_\beta = \sum_{j \in \I_\beta^c | \det(v,b_j) > 0} \det(v,b_j). \]
\end{proof}
We may now fill in all values in Table \ref{table:faces} using only the Gale dual matrix $B$ and obtain the ED degree and polar degrees of the variety $X_A$ from Example \ref{runningexample}. The results detailed in this section allow us to compute the ED degree and polar degrees of much larger examples much faster, as discussed in \S\ref{section:computation} below.

Finally we remark that we would have hoped to solve the recursion for the Euler obstruction and find compact formulas for the polar degrees and the ED degree of a projective toric variety of codimension two, similar to the codimension one case \cite[Theorem 3.7]{HS}. Unfortunately we have not been able to simplify the expressions sufficiently to find satisfying formulas in the codimension two case. While the lack of such formulas has little effect on the computational performance of the codimension two methods finding them would be mathematically appealing.

\section{Computational Performance}
In this section we briefly compare the computational performance of the specialized codimension two methods developed in \S\ref{section:MainResults} which use the Gale dual matrix $B$ with the performance of the general purpose (i.e.~for any codimension) $A$-matrix methods described in \cite{HS}. We will refer to these as the ``$B$-matrix method'' and the ``$A$-matrix method'', respectively. 

\begin{table}[h!]
\centering
\resizebox{\linewidth}{!}{
\begin{tabular}{@{} *6c @{}}
\toprule 
 \multicolumn{1}{c}     {\color{Ftitle} Example}  &   {\color{Ftitle}Size $A$} &   {\color{Ftitle}$A$-matrix method \cite{HS}} & {\color{Ftitle}$B$-matrix method \S\ref{section:MainResults}} &   {\color{Ftitle} Find faces}  &   {\color{Ftitle} Speedup factor} \\ 
 \midrule 
  \color{line}$X_{A_1}$ & $4\times 6$& \color{line}4.6s& \color{line} 0.2s& \color{line}0.3s& \color{line}23.0 \\ 
      \color{line}$X_{A_2}$ & $4\times 6$& \color{line}6.1s& \color{line} 0.2s& \color{line}0.3s& \color{line} 30.5\\ 
    \color{line}$X_{A_3}$ & $6\times 8$& \color{line}106.1s& \color{line} 1.6s& \color{line}2.3s& \color{line} 66.3\\ 
        \color{line}$X_{A_4}$ & $6\times 8$& \color{line}118.4s& \color{line} 1.8s& \color{line}2.7s& \color{line} 65.8\\ 
         \color{line}$X_{A_5}$ & $7\times 9$& \color{line}495.5s& \color{line} 5.3s& \color{line}8.1s& \color{line} 93.5\\ 
  \color{line}$X_{A_6}$ & $8\times 10$& \color{line}2611.5s& \color{line} 26.7s& \color{line}37.7s& \color{line} $ 97.8$\\ 
\bottomrule
 \end{tabular}}\vspace{1mm}
\caption{Average run times to compute the polar degrees of a codimension two projective toric variety $X_A$. The run time to generate the face lattice of ${\rm Conv}(A)$ is listed separately in the fifth column since both methods must perform this step (hence the total run time is the sum the time to find the faces and the time of either the $A$-matrix or $B$-matrix method).} \label{table:runtimes}
 \end{table}
\label{section:computation}

When computing the polar degrees or ED degree of a projective toric variety using \eqref{eq:polarDegressCM}, with either the $A$-matrix or $B$-matrix method, the primary computational task is to compute the Chern-Mather volumes of $X_A$. While the number of steps in the recursive loops for both the $A$ and $B$ matrix methods is the same the computational cost of computing the subdiagram volumes $\mu(\alpha,\beta)$ differs quite substantially. We will focus on analyzing the cost of this computation in the case where ${\rm codim}(X_A)=2$ (i.e.~where the methods of \S\ref{section:MainResults} are applicable).

\subsection{Computational Cost of $\mu(\alpha,\beta)$ for General $A$}
First consider the $A$-matrix method. Let $X_A$ be a projective toric variety with $P={\rm Conv}(A)$ and let $\alpha \supset \beta$ be faces of $P$. Further suppose that the face lattice has already been computed and that the relevant Hermite normal forms (needed for Proposition \ref{propn:SubDIagVolume}) have been stored in the process. Let $r={\dim(\alpha)-\dim(\beta)}$. To compute the subdiagram volume (using \eqref{eq:subdiagramVol} or Proposition \ref{propn:SubDIagVolume})
we must compute the following things:\begin{itemize}
\item the convex hulls of two collections of at least $r+1$ lattice points in $\R^{r}$
\item the volumes of two dimension $r$ polytopes. 
\end{itemize}For computing the convex hull of $ m$ points in $\R^r$ there exist known (optimal) algorithms of complexity $$O\left(m\log(m)+m^{\lfloor\frac{r}{2}\rfloor}\right),$$ see \cite{chazelle1993optimal}. Calculating the volume of a polytope in dimension $r$ is known to be a $\#P$-hard problem \cite[Theorem~1]{dyer1988complexity}. For existing algorithms (to the best of our knowledge) there is not a known compact (i.e.~readable/meaningful) run time bound for finding the dimension of an arbitrary polytope in dimension $r$ and different algorithms may vary from being exponential to factorial in $r$ for different polytopes. Using known algorithms, the computational cost of computing the volume of the dimension $r$ hypercube varies from being approximately factorial in $r$, i.e.~$O(r!)$, to being approximately exponential in $r$, i.e.~$O(r^24^r)$, depending on the algorithm chosen. See \cite{bueler2000exact} for an in depth discussion of current algorithms. Hence, in particular, the cost of computing the subdiagram volume $\mu(\alpha,\beta)$ will be (at least) exponential, possibly factorial, in the relative dimension $r=\dim(\alpha)-\dim(\beta)$. Further note that $r$ may be as large as $d-2$ for a $d\times n$ integer matrix $A$.

\subsection{Computational Cost of $\mu(\alpha,\beta)$ using \S\ref{section:MainResults} when ${\rm codim}(X_A)=2$ }Let $X_A$ be a codimension two projective toric variety with $P={\rm Conv}(A)$ and let $\alpha \supset \beta$ be faces of $P$. We again suppose that the face lattice has already been computed, and that faces contained in relevant lines have been identified during this process (this includes the computation of the scaling factors $\lambda$ of each vector $b=\lambda v$ for $v$ the primitive vector defining the relevant line). By precomputing all lattice indices $[\Z^d:\Z A_{\alpha}]$ for each face $\alpha$ of $P$ we may compute many of the expressions $\mu(\alpha,\beta)$ in constant time (i.e.~only a table lookup is needed). Note that, using Proposition \ref{proposition:LatticeInd1}, the computation of the lattice index $[\Z^d:\Z A_{\alpha}]$ for each face requires the computation of one Hermite normal form and one determinant of the resulting square matrix; many efficient algorithms exist for these computations. Assuming the above precomputations, the number of operations required to compute $\mu(\alpha,\beta)$ using the methods of \S\ref{section:MainResults} is as follows:
\begin{itemize}
\item constant if $\alpha$ is a proper face of $P$ and either all lattice points defining $\beta$ are in a relevant line or if neither $\alpha$ nor $\beta$ is contained in a relevant line
\item linear in $\ell_{\beta}$, where $\ell_{\beta}$ is the number of rows of $B_{\beta}$, if $\alpha$ is a proper face of $P$ where all rows of $B_{\alpha}$ are contained in a relevant line but all rows of $B_{\beta}$ are not
\item proportional to the number of operations to compute a greatest common divisor of $\ell_\beta$ integers if $\alpha=P$ and all rows of $B_{\beta}$ are contained in a relevant line
\item quadratic in $\ell_{\beta}$ if $\alpha=P$ and the rows of $B_{\beta}$ are not contained in a relevant line (in this case we are summing $2\times 2$ determinants)
\end{itemize}In particular we see that for the vast majority of possible pairs of faces $\alpha\supset \beta$ the cost of computing $\mu(\alpha,\beta)$ will be linear or constant, and at worst will be polynomial in the number of rows of $B_{\beta}$, which will always be less than or equal to the number of rows of $B$.

\subsection{Summary}Suppose $A$ is a $(n-2)\times n$ integer matrix defining a codimension $2$ projective toric variety $X_A$. Examining the definitions of the Chern-Mather volume and Euler obstruction we see that to compute all Chern-Mather volumes (and hence to compute the ED degree or polar degrees) we must compute the subdiagram volume $\mu(\alpha,\beta)$ for all possible pairs of faces $\alpha \supset \beta$ of $P={\rm Conv}(A)$. Let $F$ denote the number of faces of $P$, there are $\frac{F^2-F}{2}$ such pairs. Using the general purpose $A$-matrix methods we must perform computations which are at least exponential, possibly factorial, in $\dim(\alpha)-\dim(\beta)$; for the majority of pairs this will mean computations that are \textit{at least} exponential in a number larger than $\frac{n}{3}$. 

Let $\ell_{\beta}$ denote the number of rows in the matix $B_{\beta}$; this number will always be less than or equal to $n$. With the specialized $B$-matrix methods of \S\ref{section:MainResults} the computation of $\mu(\alpha,\beta)$ will be done in constant or linear time relative to $\ell_{\beta}$ for $\frac{F^2-3F}{2}$ of the pairs with the remaining $F$ pairs being done in at most quadratic, $O(\ell_{\beta}^2)$, time relative to the number of rows of $B_{\beta}$. 

In light of the discussion above the significant runtime gains yielded by the $B$-matrix methods, as displayed in Table \ref{table:runtimes}, are not surprising. It should, however, be noted that both combinatorial methods, either $A$-matrix or $B$-matrix, will be able to compute ED degrees and other invariants for projective toric varieties $X_A$ which simply would not be feasible using other current methods. For example ${\rm EDdegree}(X_{A_5})=301137686$ (see Table \ref{table:degrees}) represents the degree of the variety defined by the critical equations of the Euclidean distance function for $X_{A_5}$. Computing this number using algebraic/geometric methods (i.e.~Gr\"obner basis, numerical algebraic geometry, etc.) would require finding the degree of a zero dimensional variety consisting of \textit{greater than} $300$ \textit{million} isolated points in $\p^8$. Such a computation would be infeasible with current algebraic/geometric methods even over a span of weeks running on a super computer whereas the $A$-matrix or $B$-matrix methods compute this number (on a laptop) in a matter of minutes or seconds, respectively.  
\bigskip \bigskip

\begin{small}

\noindent
{\bf Acknowledgements.}
We are grateful to Ragni Piene and John Christian Ottem for their helpful comments on this paper. We would also like to thank Bernd Sturmfels for suggesting the topic to us. Martin Helmer was supported by an NSERC (Natural Sciences and Engineering Research Council of Canada) postdoctoral fellowship during the preparation of this work.
\end{small}

\bigskip
\appendix
\section{Computational Examples List}\label{appendix:compEx}
In this appendix we list the integer matrices $A$ defining the codimension two toric varieties listed in Table \ref{table:runtimes}.\small$$
A_1=\begin{bmatrix}10&
      1&
      0&
      {-7}&
      0&
      0\\
      {-7}&
      0&
      1&
      5&
      0&
      0\\
      2&
      0&
      0&
      0&
      1&
      0\\
      {-4}&
      0&
      0&
      3&
      0&
      1\\
      \end{bmatrix}
$$
$$
A_2=\begin{bmatrix}3&
      0&
      0&
      1&
      1&
      2\\
      3&
      5&
      0&
      2&
      1&
      3\\
      0&
      1&
      2&
      0&
      2&
      0\\
      1&
      1&
      1&
      1&
      1&
      1\\
      \end{bmatrix}
$$
$$
A_3=\begin{bmatrix}0&
      0&
      0&
      {-31}&
      0&
      {-7}&
      1&
      {-31}\\
      0&
      0&
      0&
      {-12}&
      0&
      {-2}&
      0&
      {-11}\\
      0&
      0&
      1&
      {-2}&
      {-1}&
      0&
      0&
      {-2}\\
      0&
      {-1}&
      0&
      1&
      1&
      0&
      0&
      1\\
      {-1}&
      0&
      0&
      7&
      0&
      1&
      0&
      7\\
      0&
      1&
      0&
      13&
      0&
      3&
      0&
      13\\
      \end{bmatrix}
$$

$$
A_4=\begin{bmatrix}3&
      0&
      0&
      1&
      1&
      2&
      1&
      2\\
      3&
      5&
      0&
      2&
      1&
      3&
      12&
      11\\
      5&
      1&
      9&
      10&
      12&
      3&
      7&
      9\\
      3&
      1&
      2&
      19&
      7&
      1&
      1&
      2\\
      0&
      1&
      2&
      0&
      2&
      0&
      5&
      7\\
      1&
      1&
      1&
      1&
      1&
      1&
      1&
      1\\
      \end{bmatrix}
$$
$$A_5=
\begin{bmatrix}3&
      0&
      0&
      1&
      1&
      2&
      1&
      2&
      7\\
      3&
      5&
      0&
      2&
      1&
      3&
      12&
      11&
      12\\
      5&
      1&
      9&
      10&
      12&
      3&
      7&
      9&
      3\\
      3&
      1&
      2&
      19&
      7&
      1&
      1&
      2&
      1\\
      0&
      1&
      2&
      0&
      2&
      0&
      5&
      7&
      21\\
      3&
      1&
      5&
      11&
      22&
      10&
      15&
      0&
      1\\
      1&
      1&
      1&
      1&
      1&
      1&
      1&
      1&
      1\\
      \end{bmatrix}
$$
$$A_6=\begin{bmatrix}2&
      3&
      4&
      0&
      {-1}&
      {-2}&
      5&
      9&
      7&
      0\\
      13&
      10&
      {-2}&
      21&
      {-1}&
      2&
      5&
      2&
      1&
      4\\
      1&
      3&
      1&
      0&
      {-2}&
      21&
      31&
      2&
      1&
      2\\
      7&
      15&
      11&
      3&
      4&
      2&
      6&
      7&
      8&
      1\\
      14&
      2&
      3&
      1&
      9&
      12&
      {-1}&
      {-1}&
      {-2}&
      {-1}\\
      1&
      {-1}&
      {-2}&
      0&
      2&
      0&
      4&
      7&
      {-6}&
      15\\
      31&
      11&
      0&
      5&
      1&
      {-2}&
      4&
      5&
      0&
      {-1}\\
      1&
      1&
      1&
      1&
      1&
      1&
      1&
      1&
      1&
      1\\
      \end{bmatrix}$$

\normalsize
For reference we include the degree, the degree of the dual variety, and ED degree of the toric varieties defined by the matrices above in Table \ref{table:degrees}. 
\begin{table}[h!]
\centering
\resizebox{.45\linewidth}{!}{
\begin{tabular}{@{} *6c @{}}
\toprule 
 \multicolumn{1}{c}     {\color{Ftitle} Example}  &   {\color{Ftitle}$\deg(X_A)$} &   {\color{Ftitle}$\deg((X_A)^{\vee})$} &  {\color{Ftitle}${\rm EDdegree}(X_A)$} \\ 
 \midrule 
    \color{line}$X_{A_1}$ & \color{line}$19$& \color{line}$27$& 
  \color{line} $170$\\
   \color{line}$X_{A_2}$ & \color{line}$28$& \color{line}$45$& 
  \color{line} $252$\\
    \color{line}$X_{A_3}$ & \color{line}$70$& \color{line}$125$& 
  \color{line} $2356$\\
  \color{line}$X_{A_4}$ & \color{line}$16924$& \color{line}$30840$& 
  \color{line} $641134$\\
  \color{line}$X_{A_5}$ & \color{line}$4570434$& \color{line}$8222171$& 
  \color{line} $301137686$\\
    \color{line}$X_{A_6}$ & \color{line}$ 581454473$& \color{line}$1056983492$& 
  \color{line} $74638158177$\\
\bottomrule
 \end{tabular}}\vspace{1mm}
\caption{The degree, degree of the $A$-discriminant, and the ED degree of the projective toric varieties appearing in Table \ref{table:runtimes}.  \label{table:degrees}}
 \end{table}
 
 \section{Formulas expressed in alternate index convention}

\label{section:AltIndConv}
In this subsection we restate the results of \S\ref{subsection:mu_A_beta} and \S\ref{subsection:mu_alpha_beta} (which used the index convention of \cite{BIUN,MT}) above in the index convention of \cite{HS}. In \cite{HS} the lattice index is contained in the normalized volume $\Vol$, while in the convention of \cite{BIUN,MT} it is contained in Euler obstruction $\Eu$. More precisely we let

\begin{align*}
\Eu'(\beta) &= \frac{Eu(\beta)} {i (A,\beta)} \\
\Vol'(\beta) &= \Vol(\beta)i(A,\beta) \\
\mu'(\alpha,\beta) &= \mu(\alpha,\beta) \frac {i(\alpha,\beta) i(A,\alpha)}{i(A,\beta)}.
\end{align*}

Let $A$ be a $d\times n$ integer matrix with $(1,\dots,1)$ in its row space, let $B$ be the Gale dual of $A$ and let $P={\rm Conv}(A)$. In this subsection we assume that $[\Z^d:\Z A]=1$. We can make this assumption without loss of generality since for the purposes of computing the polar degrees we may always find such an $A$ defining a toric variety isomorphic to the original one.

The index convention presented in this subsection is more convenient for volume computations, since we do not have to explicitly compute lattice indexes. This conversion also gives a cleaner expression for the Euler obstruction of a face since the expressions $i(\alpha, \beta)$ do not appear in the formula. More precisely the {{\em Euler obstruction}}, of a face $\beta$ of $P$, is given by 
\begin{enumerate}
\item ${\rm Eu'}(P)\,\,=\,\,1,$
\item $\displaystyle {\rm Eu'}({\beta}) \quad =\sum_{{\alpha} {\rm \;s.t.}\; {\beta}{\;\rm is\; a} \atop                                                        
{\rm proper\; face \; of \;} {\alpha}}                                          
\!\!\!\!                                                                        
(-1)^{\dim({\alpha})-\dim({\beta})-1}\cdot { \mu'(\alpha, \beta)} \cdot {\rm Eu'}(\alpha).$
\end{enumerate}As before the dimension $i$ Chern-Mather volume is given by $V_i=\sum_{\dim(\alpha)=i}{\rm Vol'}(\alpha){\rm Eu'}(\alpha)$. 
The formulas for the the polar degrees \eqref{eq:polarDegressCM}, and hence the ED degree \eqref{eq:sumofpolar}, in terms of the Chern-Mather volumes remain unchanged. 

We now restate the expressions for $\mu(\alpha,\beta)$ given in the previous subsections \S\ref{subsection:mu_A_beta} and \S\ref{subsection:mu_alpha_beta} in terms of this index convention. In the propositions below we let $A$ be a $d\times n$ integer matrix of full rank defining a projective toric variety $X_A$ and let $B$ be the Gale dual of $A$. 

In Proposition \ref{prop:muFromBNoRel_cov2} the expression $\deg(I_{B'})$ is computed using Proposition \ref{proposition:degreeBmatrix}.
\begin{proposition} \label{proposition:muPrelevant_cov2}
Let $X_A\subset \p^{n-1}$ be a projective toric variety with ${\rm codim}(X_A)=2$ and set $P={\rm Conv}(A)$. Let $\beta$ be a proper face of $P$. Then the subdiagram volume $\mu'(P,\beta)$ is as follows:\begin{enumerate}
\item[(a)] if all rows of $B_\beta$ are contained in a relevant line $v$ with $b_i=\lambda_iv$ for $b_i$ a row of $B_{\beta}$ then, if $v_+$ indices $\lambda_i>0$ and $v_-$ indices $\lambda_i<0$, we have \[
\mu'(P,\beta) =  \frac{\min\left( \sum_{i\in v_+} |\lambda_i|, \sum_{i\in v_-} |\lambda_i|\right)}{\gcd(\lambda_i)_{i \in \I_\beta}},
\]
\item[(b)] if all rows of $B_\beta$ are not contained in a relevant line  then 
\[ 
\mu'(P,\beta) =  \frac{\deg(I_{B'_\beta})}{|T(\Z^n/ \Z {B'_\beta})|} - \frac{\sum_{j : \det(w_\beta,b_j) > 0} \det(w_\beta,b_j)}{|T(\Z^n/ \Z {B'_\beta})|}. 
\] where $B'_\beta$ is the matrix $B_\beta$ with the row $w_\beta= \sum_{i \in \I_\beta} -b_i$ added.
\end{enumerate}\label{proposition:muPrelevant_cov2}\label{prop:muFromBNoRel_cov2}
\end{proposition}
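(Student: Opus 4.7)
The plan is to derive Proposition \ref{proposition:muPrelevant_cov2} as a direct translation of Theorems \ref{theorem:muPrelevant} and \ref{theorem:muFromBNoRel} into the Helmer–Sturmfels index convention recalled at the beginning of this appendix. The conversion rule
\[
\mu'(\alpha,\beta) \,=\, \mu(\alpha,\beta)\cdot i(\alpha,\beta)\cdot \frac{i(A,\alpha)}{i(A,\beta)}
\]
is forced by substituting $\Eu'(\beta)=\Eu(\beta)/i(A,\beta)$ into the Euler obstruction recursion of Proposition \ref{propn:EUPropDef} and comparing with the $\Eu'$-recursion displayed just above Proposition \ref{proposition:muPrelevant_cov2}, so once one accepts it the proof reduces to bookkeeping. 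Specializing to $\alpha=P$ and invoking the standing assumption $[\Z^d:\Z A]=1$ one gets $i(A,P)=1$, so the translation becomes simply $\mu'(P,\beta)=\mu(P,\beta)\cdot i(P,\beta)/i(A,\beta)$.

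First I would treat case (a). From Theorem \ref{theorem:muPrelevant} we read off $\mu(P,\beta)\cdot i(P,\beta)=\min\bigl(\sum_{i\in v_+}|\lambda_i|,\sum_{i\in v_-}|\lambda_i|\bigr)$, since the factor $\gcd(\lambda_i)_{i\in\I_\beta}$ that appears as $i(P,\beta)$ cancels the same factor in the denominator of $\mu(P,\beta)$. The remaining task is to identify $i(A,\beta)$ with $\gcd(\lambda_i)_{i\in\I_\beta}$. This is essentially already present in the proof of Theorem \ref{theorem:muPrelevant}: the relevant submatrix $C$ in \eqref{eq:CtildeProp2_5} has greatest common divisor of maximal minors equal to $[1,2]^{r-3}\,|b_1,v|^{r-2}\,\gcd(\lambda_i)_{i\in\I_\beta}$, and combining this with the vanishing of the $[1,2]$-powers after dividing by $[\Z^d:\Z A]$ (which is $1$ by assumption) and with Proposition \ref{proposition:indexgcd} extracts exactly the required $\gcd(\lambda_i)_{i\in\I_\beta}$.

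For case (b) the same strategy applies starting from Theorem \ref{theorem:muFromBNoRel}. There $i(P,\beta)=|T(\Z^{r+1}/\Z B'_\beta)|=\gcd([i,j])_{i,j\in\I_\beta}$ cancels the identical factor in the denominator of $\mu(P,\beta)$, leaving the numerator $\deg(I_{B'_\beta})-\sum_{j\in\I_\beta,\det(w_\beta,b_j)>0}\det(w_\beta,b_j)$. Dividing by $i(A,\beta)$ then re-inserts a torsion denominator, but now relative to the full ambient $\Z^n$ rather than the intrinsic $\Z^{r+1}$ that appeared in Theorem \ref{theorem:muFromBNoRel}; the identity $|T(\Z^n/\Z B'_\beta)|=|T(\Z^{r+1}/\Z B'_\beta)|\cdot i(A,\beta)$, implicit in the lattice index computation $[1,2]^{n-4}\,i(A,\beta)=[1,2]^{n-r-1}\,[\Z^{r-2}:\Z C]$ at the end of the proof of Theorem \ref{theorem:muFromBNoRel}, supplies exactly the denominator claimed.

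The main obstacle, while mild, is the lattice bookkeeping required to verify the identification $i(A,\beta)=\gcd(\lambda_i)_{i\in\I_\beta}$ in case (a) and the analogous torsion comparison in case (b). Both identifications are accessible from computations already performed in \S\ref{section:MainResults}, so the present proposition is essentially a dictionary restatement rather than a new result; nonetheless the indices must be tracked with some care to confirm that all the factors collapse correctly under the assumption $[\Z^d:\Z A]=1$.
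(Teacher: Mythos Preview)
Your overall strategy---deriving the proposition from Theorems \ref{theorem:muPrelevant} and \ref{theorem:muFromBNoRel} via the conversion rule $\mu'(\alpha,\beta)=\mu(\alpha,\beta)\,i(\alpha,\beta)\,i(A,\alpha)/i(A,\beta)$---is exactly right, and indeed the paper offers no separate proof: the proposition is presented as a literal restatement. But you have introduced a spurious distinction between $i(P,\beta)$ and $i(A,\beta)$. In the paper's notation these are the \emph{same} quantity: $i(\alpha,\beta)=[\Z\alpha\cap\R\beta:\Z\beta]$, and when $\alpha$ is the full polytope the paper writes this indifferently as $i(P,\beta)$ or $i(A,\beta)$ (compare the statement of Theorem \ref{theorem:muFromBNoRel} with the last paragraph of its proof). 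Consequently, for $\alpha=P$ the conversion collapses to
\[
\mu'(P,\beta)\;=\;\mu(P,\beta)\cdot\frac{i(P,\beta)\cdot i(P,P)}{i(P,\beta)}\;=\;\mu(P,\beta),
\]
and both parts of the proposition are verbatim the formulas of Theorems \ref{theorem:muPrelevant} and \ref{theorem:muFromBNoRel}, with the sole cosmetic change in (b) that $\Z^{r+1}$ is written as $\Z^n$; this is harmless because $B'_\beta$ has $r+1$ rows, so $\Z^n/\Z B'_\beta\cong \Z^{n-r-1}\oplus\Z^{r+1}/\Z B'_\beta$ and the torsion subgroups coincide.

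Your case (a) reaches the right conclusion, but only because your separate ``identification'' of $i(A,\beta)$ with $\gcd(\lambda_i)$ is just a re-derivation of $i(P,\beta)$ under another name, so the extra factor you introduced cancels again. In case (b), however, the identity you invoke,
\[
|T(\Z^n/\Z B'_\beta)|\;=\;|T(\Z^{r+1}/\Z B'_\beta)|\cdot i(A,\beta),
\]
is false in general: since $i(A,\beta)=i(P,\beta)=|T(\Z^{r+1}/\Z B'_\beta)|$ by Theorem \ref{theorem:muFromBNoRel}, your identity would force $|T(\Z^n/\Z B'_\beta)|=|T(\Z^{r+1}/\Z B'_\beta)|^2$, contradicting the equality of torsion groups noted above (take Example \ref{runningexample5}, where $|T(\Z^4/\Z B'_{e_3})|=4$, not $16$). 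The formula you land on is nonetheless correct, but for the trivial reason $\mu'(P,\beta)=\mu(P,\beta)$, not the reason you give.
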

 
\begin{proposition}
Let $X_A\subset \p^{n-1}$ be a projective toric variety with ${\rm codim}(X_A)=2$ and set $P={\rm Conv}(A)$. Let $\beta \subset \alpha$ be proper faces of $P$. Then the subdiagram volume $\mu'(\alpha,\beta)$ is as follows:\begin{enumerate}
\item[(a)] if not all rows of $B_\alpha$ are contained in a relevant line or if all rows of $B_\beta$ are contained in a relevant line then $$
\mu'(\alpha,\beta) =\frac{\left[\Z^{d}:\Z A_{\alpha}\right]}{\left[\Z^{d}:\Z A_{\beta}\right]} ,
$$
\item[(b)] if not all rows of $B_\beta$, but all rows of $B_\alpha$,  are contained in a relevant line, then $$
\mu'(\alpha,\beta) =\frac{\left[\Z^{d}:\Z A_{\alpha}\right]}{\left[\Z^{d}:\Z A_{\beta}\right]}\cdot \min\left( \sum_{b_i\;:\; \gamma_i>0}|\gamma_i|, \sum_{b_i\;:\; \gamma_i<0}|\gamma_i|\right),
$$
where $\gamma_i=\det(v,b_i)$ and $i$ loops over all $i \in \I_\beta$.
\end{enumerate}
\end{proposition}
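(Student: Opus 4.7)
The plan is to reduce this to the results of \S\ref{subsection:mu_alpha_beta} by a direct application of the conversion formula
\[
\mu'(\alpha,\beta) \;=\; \mu(\alpha,\beta)\cdot \frac{i(\alpha,\beta)\cdot i(A,\alpha)}{i(A,\beta)}
\]
recorded at the start of the appendix. Under the standing assumption $[\Z^d:\Z A]=1$ we have $\Z A = \Z^d$, so that by the definition $i(A,\alpha) = [\Z A \cap \R\alpha : \Z A_\alpha]$ we obtain
\[
i(A,\alpha) \;=\; [\Z^d\cap \R A_\alpha : \Z A_\alpha],
\]
which is precisely the quantity $[\Z^d:\Z A_\alpha]$ appearing in the proposition (the bracket must be interpreted within the rational span $\R A_\alpha$, since $\Z A_\alpha$ is not full rank in $\Z^d$). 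Hence the ratio $i(A,\alpha)/i(A,\beta)$ equals $[\Z^d:\Z A_\alpha]/[\Z^d:\Z A_\beta]$.

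For case (a), the two stated subcases are exactly the two hypotheses (i) and (ii) of Theorem \ref{theorem:twoFaces1}, which supplies $\mu(\alpha,\beta) = 1$ and $i(\alpha,\beta) = 1$. Substituting these into the conversion formula gives
\[
\mu'(\alpha,\beta) \;=\; \frac{i(A,\alpha)}{i(A,\beta)} \;=\; \frac{[\Z^d:\Z A_\alpha]}{[\Z^d:\Z A_\beta]},
\]
as claimed. For case (b) the hypotheses coincide with those of Theorem \ref{theorem:twoFaces3}, which furnishes $\mu(\alpha,\beta) = M/g$ and $i(\alpha,\beta) = g$, where
\[
M = \min\Bigl(\sum_{\gamma_i>0}|\gamma_i|,\ \sum_{\gamma_i<0}|\gamma_i|\Bigr) \quad\text{and}\quad g = \gcd(\gamma_i)_{i\in \I_\beta}.
\]
The factor $g$ cancels in the product $\mu(\alpha,\beta)\cdot i(\alpha,\beta) = M$, and feeding this into the conversion formula produces $\mu'(\alpha,\beta) = M\cdot [\Z^d:\Z A_\alpha]/[\Z^d:\Z A_\beta]$, which is the stated expression.

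The main obstacle is notational rather than mathematical: one must verify that the symbol $[\Z^d:\Z A_\alpha]$ appearing in the statement is consistent with $i(A,\alpha)$ when $\Z A_\alpha$ has rank strictly less than $d$, which requires interpreting the index inside the affine span of $A_\alpha$. Once this identification is fixed, the result is a direct translation of Theorems \ref{theorem:twoFaces1}, \ref{theorem:twoFaces2}, and \ref{theorem:twoFaces3} into the alternate index convention of the appendix, and no fresh combinatorial or geometric input is required.
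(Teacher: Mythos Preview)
Your proposal is correct and follows precisely the approach the paper intends: the proposition in the appendix is stated without proof as a direct restatement of Theorems \ref{theorem:twoFaces1} and \ref{theorem:twoFaces3} via the conversion formula $\mu'(\alpha,\beta) = \mu(\alpha,\beta)\,i(\alpha,\beta)\,i(A,\alpha)/i(A,\beta)$ recorded at the start of the appendix, and your write-up simply makes that translation explicit. Your remark about interpreting $[\Z^d:\Z A_\alpha]$ inside the rational span of $A_\alpha$ is the one point worth spelling out, and you handle it appropriately.
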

 
\bibliography{ref}

\newcommand{\etalchar}[1]{$^{#1}$}
\begin{thebibliography}{DHO{\etalchar{+}}16}

\bibitem[AH17]{AH17}
Michael~F Adamer and Martin Helmer.
\newblock Euclidean distance degree for chemical reaction networks.
\newblock {\em arXiv preprint arXiv:1707.07650}, 2017.

\bibitem[Alu18]{aluffis2016paper}
Paolo Aluffi.
\newblock Projective duality and a chern-mather involution.
\newblock {\em Transactions of the American Mathematical Society},
  370(3):1803--1822, 2018.

\bibitem[BEF00]{bueler2000exact}
Benno B{\"u}eler, Andreas Enge, and Komei Fukuda.
\newblock Exact volume computation for polytopes: a practical study.
\newblock In {\em Polytopes--combinatorics and computation}, pages 131--154.
  Springer, 2000.

\bibitem[BGH{\etalchar{+}}10]{bank2010geometry}
Bernd Bank, Marc Giusti, Joos Heintz, Mohab~Safey El~Din, and Eric Schost.
\newblock On the geometry of polar varieties.
\newblock {\em Applicable Algebra in Engineering, Communication and Computing},
  21(1):33--83, 2010.

\bibitem[Cha93]{chazelle1993optimal}
Bernard Chazelle.
\newblock An optimal convex hull algorithm in any fixed dimension.
\newblock {\em Discrete \& Computational Geometry}, 10(1):377--409, 1993.

\bibitem[CLS11]{CLS}
David~A. Cox, John~B. Little, and Henry~K. Schenck.
\newblock {\em Toric varieties}, volume 124 of {\em Graduate Studies in
  Mathematics}.
\newblock American Mathematical Society, Providence, RI, 2011.

\bibitem[DF88]{dyer1988complexity}
Martin~E. Dyer and Alan~M. Frieze.
\newblock On the complexity of computing the volume of a polyhedron.
\newblock {\em SIAM Journal on Computing}, 17(5):967--974, 1988.

\bibitem[DHO{\etalchar{+}}16]{DHOST}
Jan Draisma, Emil Horobe{\c{t}}, Giorgio Ottaviani, Bernd Sturmfels, and
  Rekha~R Thomas.
\newblock The euclidean distance degree of an algebraic variety.
\newblock {\em Foundations of Computational Mathematics}, 16(1):99--149, 2016.

\bibitem[DS02]{DS}
Alicia Dickenstein and Bernd Sturmfels.
\newblock Elimination theory in codimension 2.
\newblock {\em J. Symbolic Comput.}, 34(2):119--135, 2002.

\bibitem[Ful13]{fulton}
William Fulton.
\newblock {\em Intersection theory}, volume~2.
\newblock Springer Science \& Business Media, 2013.

\bibitem[GKZ94]{GKZ}
I.~M. Gel$\prime$fand, M.~M. Kapranov, and A.~V. Zelevinsky.
\newblock {\em Discriminants, resultants, and multidimensional determinants}.
\newblock Mathematics: Theory \& Applications. Birkh\"auser Boston, Inc.,
  Boston, MA, 1994.

\bibitem[GS17]{M2}
Daniel~R. Grayson and Michael~E. Stillman.
\newblock {\em Macaulay2, a software system for research in algebraic
  geometry}, 2017.

\bibitem[Hol88]{holme1988geometric}
Audun Holme.
\newblock The geometric and numerical properties of duality in projective
  algebraic geometry.
\newblock {\em Manuscripta mathematica}, 61(2):145--162, 1988.

\bibitem[HS16]{HS}
M.~{Helmer} and B.~{Sturmfels}.
\newblock {Nearest Points on Toric Varieties}.
\newblock {\em To appear in Mathematica Scandinavica. Arxiv: 1603.06544}, 2016.

\bibitem[K{\etalchar{+}}86]{kleiman1986tangency}
Steven~L Kleiman et~al.
\newblock Tangency and duality.
\newblock In {\em Proceedings of the 1984 Vancouver conference in algebraic
  geometry}, volume~6, pages 163--225, 1986.

\bibitem[Kea98]{Keating}
Michael~Edward Keating.
\newblock {\em A first course in module theory}.
\newblock World Scientific, 1998.

\bibitem[LRAR13]{los2013role}
Ferdinand~CO Los, Tara~M Randis, Raffi~V Aroian, and Adam~J Ratner.
\newblock Role of pore-forming toxins in bacterial infectious diseases.
\newblock {\em Microbiology and Molecular Biology Reviews}, 77(2):173--207,
  2013.

\bibitem[Mac74]{macpherson}
R.~D. MacPherson.
\newblock Chern classes for singular algebraic varieties.
\newblock {\em Ann. of Math. (2)}, 100:423--432, 1974.

\bibitem[MS04]{MScca}
Ezra Miller and Bernd Sturmfels.
\newblock {\em Combinatorial commutative algebra}, volume 227.
\newblock Springer Science \& Business Media, 2004.

\bibitem[MT11]{MT}
Yutaka Matsui and Kiyoshi Takeuchi.
\newblock A geometric degree formula for {$A$}-discriminants and {E}uler
  obstructions of toric varieties.
\newblock {\em Adv. Math.}, 226(2):2040--2064, 2011.

\bibitem[N{\o}d18]{BIUN}
Bernt Ivar~Utst{\o}l N{\o}dland.
\newblock Local euler obstructions of toric varieties.
\newblock {\em Journal of Pure and Applied Algebra}, 222(3):508--533, 2018.

\bibitem[OPVV14]{degreeLatticeIdeals}
Liam O'Carroll, Francesc Planas-Vilanova, and Rafael~H Villarreal.
\newblock Degree and algebraic properties of lattice and matrix ideals.
\newblock {\em SIAM Journal on Discrete Mathematics}, 28(1):394--427, 2014.

\bibitem[OSS14]{ottaviani2014exact}
Giorgio Ottaviani, Pierre-Jean Spaenlehauer, and Bernd Sturmfels.
\newblock Exact solutions in structured low-rank approximation.
\newblock {\em SIAM Journal on Matrix Analysis and Applications},
  35(4):1521--1542, 2014.

\bibitem[Pie78]{piene1978polar}
Ragni Piene.
\newblock Polar classes of singular varieties.
\newblock {\em Ann. Sci. {\'E}cole Norm. Sup.(4)}, 11(2):247--276, 1978.

\bibitem[Pie15]{PienePolar}
Ragni Piene.
\newblock Polar varieties revisited.
\newblock In {\em Computer algebra and polynomials}, volume 8942 of {\em
  Lecture Notes in Comput. Sci.}, pages 139--150. Springer, Cham, 2015.

\bibitem[{Pie}16]{PieneCM}
R.~{Piene}.
\newblock {Chern-Mather classes of toric varieties}.
\newblock {\em ArXiv e-prints}, April 2016.

\bibitem[Rot10]{AMA}
Joseph~J Rotman.
\newblock {\em Advanced modern algebra}, volume 114.
\newblock American Mathematical Soc., 2010.

\bibitem[Tev06]{tevelev2006projective}
Evgueni~A Tevelev.
\newblock {\em Projective duality and homogeneous spaces}, volume 133.
\newblock Springer Science \& Business Media, 2006.

\end{thebibliography}
\bibliographystyle{alpha}
\end{document}